\newtheorem{theorem}{Theorem}[section]
\newtheorem{corollary}{Corollary}
\newtheorem{lemma}[theorem]{Lemma}
\theoremstyle{definition}
\newtheorem{definition}[theorem]{Definition}
\title[Generalized Strichartz estimates] 
      {Generalized inhomogeneous Strichartz estimates}
\author[Robert Schippa]{Robert Schippa}
\address{Fakult\"at f\"ur Mathematik, Universit\"at Bielefeld, Postfach 10 01 31, 33501 Bielefeld, Germany}
\subjclass{Primary: 35B45; Secondary: 35Q55.}
 \keywords{dispersive equations, Strichartz estimates, inhomogeneous equations, spherical symmetry, spherical averages}
 \email{robert.schippa@uni-bielefeld.de}
\begin{document}
\maketitle

\bigskip
\begin{abstract}
We prove new inhomogeneous generalized Strichartz estimates, which do not follow from the homogeneous generalized estimates by virtue of the Christ-Kiselev lemma. Instead, we make use of the bilinear interpolation argument worked out by Keel and Tao and refined by Foschi presented in a unified framework. Finally, we give a sample application.
\end{abstract}

\section{Introduction}
\label{section:introduction}
We start by briefly revisiting homogeneous and inhomogeneous Strichartz estimates, their link and previous results.
 For unexplained terminology and notation see Section \ref{section:preliminaries}.
Let us consider the homogeneous equation
 \begin{equation}
 \label{eq:homogeneousEquation}
 \left\{\begin{array}{cl}
 i \partial_t u(t,x) + \varphi(D) u(t,x) &= 0, \, (t,x) \in \mathbb{R} \times \mathbb{R}^n, \\
 u(0,\cdot) &= u_0.  \end{array} \right.
 \end{equation}
 In the following we shall confine ourselves to the dispersion relation $\varphi(\rho) = \rho^a, \, a \geq 1$, which yields a simple scaling condition for the free solutions. The method can be extended to more general phase functions see e.g. \cite{Ovcharov2012} but since the generalization is straight-forward we choose to focus on the main argument.\\
We shall denote the unitary group generated by $iD^a$ as $$\mathcal{U}_a = \left( U_a(t) \right)_{t \in \mathbb{R}} = \left( e^{it D^a} \right)_{t \in \mathbb{R}}.$$
 The homogeneous Strichartz estimates control the mixed $L_t^q L_x^p$-norm of the free solution with respect to an $L^2$-Sobolev norm of the initial datum:
 \begin{equation}
 \label{eq:homogeneousStrichartzEstimate}
 \Vert e^{it D^a} u_0 \Vert_{L_t^q L_x^p} \lesssim_{n,p,q} \Vert u_0 \Vert_{\dot{H}^{-s}}
 \end{equation}
 with $-s=\frac{n}{2} - \frac{n}{p} - \frac{a}{q}$ fixed by scaling and $q,p \geq 2$ due to translation invariance.\\
 For classical Strichartz estimates, that are estimates, which hold without further assumptions on the wave-functions, the sharp range of the homogeneous estimates was found in \cite{TaoKeel1998} by Keel and Tao starting from an energy estimate and a dispersive estimate:\\
  When $\mathcal{U}=(U(t))_{t \in \mathbb{R}}$ denotes the propagator, typically after localizing frequencies to unit scale,  the energy estimate states as
 \begin{equation*}
 \Vert U(t) u_0 \Vert_{L_x^2} \lesssim_{n} \Vert u_0 \Vert_{L_x^2}
 \end{equation*}
 and the dispersive estimate can come up as the untruncated decay
 \begin{equation}
 \label{eq:untruncatedDecayEstimate}
 \Vert U(t) u_0 \Vert_{L_x^\infty} \lesssim_n |t|^{-\sigma} \Vert u_0 \Vert_{L_x^1} \; \; (t \neq 0)
 \end{equation}
 or as the (stronger) truncated decay
 \begin{equation}
 \label{eq:truncatedDecayEstimate}
 \Vert U(t) u_0 \Vert_{L_x^\infty} \lesssim_n (1+|t|)^{-\sigma} \Vert u_0 \Vert_{L_x^1} \; \; (t \neq 0).
 \end{equation}
 $\sigma$ is called the decay parameter, which we find to be
\begin{equation*}
\label{eq:decayParametersUa}
\sigma(a,n)= \left\{\begin{array}{cl} \frac{n-1}{2}, &\; \mbox{if } a =1, \\
 \frac{n}{2}, &\; \mbox{if } a \neq 1. \end{array} \right. 
 \end{equation*}
For $a \neq 1$ see for instance \cite[Remark~2,~p.~1644]{Guo2008}, for $a=1$ this is common knowledge. Since we shall work at fixed spatial dimension and with a fixed unitary group, we will usually suppress the dependence of $a$ and $n$.
When we consider local inhomogeneous estimates in Section \ref{subsection:localInhomogeneousEstimates} we state our modified assumptions on decay estimates.
For further references on the history of Strichartz estimates we also refer to \cite{TaoKeel1998} and references therein. The sharp range is found by maximally anisotropically propagating waves, so-called Knapp-type examples (cf. \cite[p.~964]{TaoKeel1998}). More estimates become available, e.g. if one considers spherically symmetric data ruling out the classical Knapp-type examples. For Strichartz estimates for more general dispersion relations with non-vanishing second derivative, e.g. $\varphi(\rho) = (1+\rho^2)^{1/2}$ which relates to the Klein-Gordon equation we refer to \cite{ChoOzawaXia2011}. It turns out that the admissible range is the same as for Schr\"odinger-like equations, although the corresponding estimate \eqref{eq:homogeneousStrichartzEstimate} involves a pseudo-differential operator taking into account the inhomogeneity of the dispersion relation.\\
 Inhomogeneous estimates come into play controlling the solution to the inhomogeneous equation with zero-initial condition
 \begin{equation*}
 \label{eq:inhomogeneousEquation}
 \left\{\begin{array}{cl}
 i \partial_t u(t,x) + D^a u(t,x) = F(t,x), \, (t,x) \in \mathbb{R} \times \mathbb{R}^n, \\
 \lim_{t \rightarrow - \infty} u(t,x) = 0. \end{array} \right.
 \end{equation*}
 The weak solution is given by the Duhamel formula
 \begin{equation*}
 \label{eq:weaksolutionInhomogeneousEquation}
 u(t,x) = -i \int_{-\infty}^t e^{i(t-\tau)D^a} F(\tau, x) d\tau
\end{equation*}
and inhomogeneous estimates state as follows:
\begin{equation*}
\label{eq:inhomogeneousEstimates}
\left\Vert \int_{-\infty}^t e^{i(t-\tau)D^a} F(\tau) d\tau \right\Vert_{L_t^{\tilde{q}} L_x^{\tilde{p}}} \lesssim_{n,p,q,\tilde{p},\tilde{q}} \Vert D^{-2s} F \Vert_{L_t^{q^\prime} L_x^{p^\prime}}
\end{equation*} 
This time we have the scaling condition:
\begin{equation}
\label{eq:scalingInhomogeneousEstimates}
\beta_a(q,\tilde{q},p,\tilde{p},s)=\frac{1}{q} + \frac{1}{\tilde{q}} - \frac{n}{a} \left( 1- \frac{1}{p} - \frac{1}{\tilde{p}} \right) - \frac{2s}{a} = 0
\end{equation}
Denoting the time-evolution operator as
\begin{equation*}
\begin{split}
T: L^2 \rightarrow& L_t^q L_x^p \\
	u_0 \mapsto& D^{s} e^{it D^a} u_0,
	\end{split}
\end{equation*}
we find the adjoint operator to be
\begin{equation*}
\begin{split}
T^*: L_t^{\tilde{q}^\prime} L_x^{\tilde{p}^\prime} \rightarrow& L^2 \\
	F \mapsto& D^{\tilde{s}} \int_{-\infty}^{\infty} e^{-i \tau D^a} F(\tau) d\tau,
\end{split}
\end{equation*}
and finally, we have
\begin{equation*}
\begin{split}
TT^*: L_t^{\tilde{q}^\prime} L_x^{\tilde{p}^\prime} \rightarrow& L_t^q L_x^p \\
		F \mapsto& D^{(s+\tilde{s})} \int_{-\infty}^{\infty} e^{i(t-\tau)D^a} F(\tau) d\tau.
\end{split}
\end{equation*}
From the Christ-Kiselev lemma (cf. \cite[Theorem~1.2,~p.~410]{Christ2000}) and its operator-valued extension (cf. \cite[pp.~1481-1483]{Tao2000}) we find that two homogeneous estimates 
with coefficients $(q,p)$ and $(\tilde{q},\tilde{p})$ yield an inhomogeneous estimate with coefficients $(q,p,\tilde{q},\tilde{p})$ if $\tilde{q}^\prime < q$.\\
A precise analysis performed by Foschi in \cite{Foschi2005} showed that the method employed in \cite{TaoKeel1998} can be extended to find more inhomogeneous estimates than the ones, which already follow from the homogeneous estimates and the Christ-Kiselev lemma. We shall see that this method is not confined to classical Strichartz estimates but that one can also start with a generalized setting. The important notions will be declared in greater detail in Section \ref{section:preliminaries}.
In the following we shall work with the notion of range spaces $Z^s_p$, which resemble $L^p$-spaces with derivatives but incorporate the additional assumptions on the wave-functions, effectively giving rise to an extended range.
In the special case of spherical symmetry this has already been done in \cite{Ovcharov2012}, though not with the sharp range of decay parameters.\\
We shall consider global estimates of the kind
\begin{equation}
\label{eq:globalEstimateI}
\left\Vert \int_{-\infty}^t e^{i(t-\tau)D^a} F(\tau) d\tau \right\Vert_{L_t^q Z^{s}_p} \lesssim_{n,p,q,\tilde{p},\tilde{q}} \Vert F \Vert_{L_t^{\tilde{q}^\prime} Z^{-s}_{\tilde{p}^\prime}}
\end{equation}
and
\begin{equation}
\label{eq:globalEstimateII}
\left\Vert \int_{-\infty}^t e^{i(t-\tau)D^a} F(\tau) d\tau \right\Vert_{Z^{s}_{p,q}} \lesssim_{n,p,q,\tilde{p},\tilde{q}} \Vert F \Vert_{Z^{-s}_{\tilde{p}^\prime, \tilde{q}^\prime}}.
\end{equation}
We find the following theorem to hold:
 \begin{theorem}[Global inhomogeneous estimates]
 \label{thm:globalInhomogeneousEstimates}
Let $a \geq 1$. Suppose that the family of linear operators $\mathcal{U}_a$ admits generalized homogeneous Strichartz estimates with range spaces $(Z_p)_{p \in [1,\infty]}$, extended decay parameter $\sigma^\prime$ and with the generalized Strichartz estimates admitting a generalized dispersive estimate. Suppose that for $1 \leq q, \tilde{q}, p, \tilde{p} \leq \infty$, $s \in \mathbb{R}$ we have $\beta_a(q,\tilde{q},p,\tilde{p},s)=0$. In the non-sharp case, that is  $1/q + 1/\tilde{q} < 1 , \, q, \, \tilde{q} < \infty$, we find the estimates \eqref{eq:globalEstimateI} and \eqref{eq:globalEstimateII} to hold, if 
 \begin{equation*}
 \begin{split}
\exists \; \sigma_1, \sigma_2 \in (\sigma,\sigma^\prime):\\
1 \leq &\mu = \frac{(a/2) \left( \sigma_1/2 + \sigma_2/2 - \sigma \right)}{s-r + \left( (a \sigma_1 -n)/p + (a \sigma_2 -n)/\tilde{p} \right)/2} < \infty \\
\frac{\sigma_1 - 1}{\sigma_1} \frac{p}{2} \leq \mu \leq \frac{p}{2} &\; \; \; \; \frac{\sigma_2 -1}{\sigma_2} \frac{\tilde{p}}{2} \leq \mu \leq \frac{\tilde{p}}{2} \nonumber \\
\frac{(\sigma_1/2)}{(1/q) + (\sigma_1/p)} < \mu &\; \; \; \; \frac{(\sigma_2/2)}{(1/\tilde{q}) + (\sigma_2/\tilde{p})} < \mu \nonumber
\end{split}
\end{equation*}
In the sharp case, that is $1/q + 1/\tilde{q} = 1 $, where, in addition to the requirements for the non-sharp case $2 < p, \tilde{p} < \infty$, we find the estimate \eqref{eq:globalEstimateII} to hold, if
 \begin{align*}
\exists \; \sigma_1, \sigma_2 \in (\sigma,\sigma^\prime):\\
1 < &\mu = \frac{(a/2) \left( \sigma_1/2 + \sigma_2/2 - \sigma \right)}{s-r + \left( (a \sigma_1 -n)/p + (a \sigma_2 -n)/\tilde{p} \right)/2} < \infty \\
\frac{\sigma_1 - 1}{\sigma_1} \frac{p}{2} < \mu < \frac{p}{2} &\; \; \; \;
\frac{\sigma_2 -1}{\sigma_2} \frac{\tilde{p}}{2} < \mu < \frac{\tilde{p}}{2} \\
\frac{(\sigma_1/2)}{(1/q) + (\sigma_1/p)} < \mu &\; \; \; \; \frac{(\sigma_2/2)}{(1/\tilde{q}) + (\sigma_2/\tilde{p})} < \mu \\
\frac{1}{p} \leq \frac{1}{q} &\; \; \; \; \frac{1}{\tilde{p}} \leq \frac{1}{\tilde{q}}
 \end{align*}
 \end{theorem}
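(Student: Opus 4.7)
The approach I would take is to adapt the bilinear interpolation argument of Keel--Tao, as refined by Foschi, to the abstract setting of range spaces $Z_p$. By the $TT^*$ identity displayed in the introduction, estimate \eqref{eq:globalEstimateI} is equivalent, after dualization, to the boundedness of the bilinear form
\[
\mathcal{B}(F,G) = \iint_{\tau<t} \langle U_a(t-\tau) F(\tau), G(t) \rangle\, d\tau\, dt
\]
by $\|F\|_{L^{\tilde q'}_t Z^{-s}_{\tilde p'}} \|G\|_{L^{q'}_t Z^{-s}_{p'}}$, with \eqref{eq:globalEstimateII} corresponding to the same form but with the outer time norms upgraded to the Lorentz-type spaces $Z^{-s}_{\tilde p', \tilde q'}$ and $Z^{-s}_{p', q'}$. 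The plan is to dyadically decompose $\mathcal{B} = \sum_{j\in\mathbb{Z}} \mathcal{B}_j$ according to $|t-\tau|\sim 2^j$, establish a two-parameter family of bilinear estimates on each piece, and then recombine through an atomic/Lorentz-type summation whose index is exactly the parameter $\mu$ of the statement.

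On a dyadic shell of scale $2^j$, the assumed generalized dispersive estimate $\|U_a(t)f\|_{Z_\infty}\lesssim |t|^{-\sigma'}\|f\|_{Z_1}$ together with energy conservation on $Z_2$ yields, by bilinear complex interpolation separately in the $F$- and $G$-factors, a bound of the schematic form $|\mathcal{B}_j(F,G)|\lesssim 2^{-j\beta}\|F\|_{L^{\tilde q'}_t(I_j) Z^{-s}_{\tilde p'}} \|G\|_{L^{q'}_t(I_j) Z^{-s}_{p'}}$ for any auxiliary decay parameters $\sigma_1,\sigma_2\in(\sigma,\sigma')$. The dependence of $\beta$ on $(\sigma_1,\sigma_2,s,p,\tilde p,q,\tilde q)$ is pinned down by the scaling identity $\beta_a=0$ and produces precisely the first formula for $\mu$ in the theorem; the two-sided inequalities $\frac{\sigma_i-1}{\sigma_i}\frac{p}{2}\le\mu\le\frac{p}{2}$ (and its tilded analogue) are the admissibility range for this bilinear interpolation, while the ratio bounds $\frac{\sigma_i/2}{1/q+\sigma_i/p}<\mu$ are the nontrivial conditions for convergence of the subsequent $\ell^\mu$ time summation.

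In the non-sharp case $1/q+1/\tilde q<1$, the $\ell^\mu$ summation over dyadic shells can be carried out directly via Young's convolution inequality in the time variable, giving both \eqref{eq:globalEstimateI} and \eqref{eq:globalEstimateII}. In the sharp case $1/q+1/\tilde q=1$, Young's inequality fails at the endpoint, and following Foschi one must instead perform an atomic decomposition of $F$ and $G$ on level sets in time, summing the resulting pieces in $\ell^\mu$; this atomic argument forces the strict inequalities throughout, necessitates the open-endpoint restriction $2<p,\tilde p<\infty$ so that the bilinear interpolation can land in the interior of the spatial range, and imposes $1/p\le 1/q$ and $1/\tilde p\le 1/\tilde q$ so that the embeddings $L^{q}\hookrightarrow L^{q,\mu}$ (and symmetrically) can take the place of the failed Young inequality.

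The main obstacle I anticipate is the parameter bookkeeping: one must verify that admissibility of the bilinear interpolation, the scaling identity $\beta_a = 0$, the convergence of the $\ell^\mu$ sum, and (in the sharp case) the atomic decomposition can all be satisfied simultaneously by a single choice of $(\sigma_1,\sigma_2,\mu)$ obeying the constraints of the theorem. The underlying mechanism is purely algebraic once one works abstractly with $Z_p$; the payoff is that the conclusion applies uniformly to every concrete setting in which generalized homogeneous Strichartz estimates with the required extended decay $\sigma'$ are available.
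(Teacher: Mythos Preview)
Your outline follows the Keel--Tao/Foschi architecture at a coarse level, but there are two substantive errors that would make the argument fail as written.

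First, you have the source of the extended range backwards. The generalized dispersive estimate in the paper still decays like $|t|^{-\sigma}$, not $|t|^{-\sigma'}$; the ``extended decay parameter'' $\sigma'$ does \emph{not} refer to an improved $Z_1\to Z_\infty$ bound. The local estimates (Theorem~\ref{thm:localEstimates}) are obtained by interpolating the ordinary dispersive point $(0,0,0,0,r)$, $r=(a\sigma-n)/2$, against the \emph{assumed generalized homogeneous Strichartz estimates} in their factorized $TT^*$ form~\eqref{eq:globalInhomogEst}, with $\sigma_1,\sigma_2\in(\sigma,\sigma')$ running along the extended sharp lines. Interpolating only energy against a dispersive estimate of decay $\sigma'$ would neither match the hypotheses nor produce the wedge of Figure~\ref{fig:globalInhomogeneousEstimates}. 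The quantity $\mu$ in the statement is $\mu=1/\theta$, where $\theta$ is the interpolation parameter between the dispersive vertex and the factorized edge; it is not an $\ell^\mu$ summation index, and no Lorentz-space machinery is used.

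Second, your handling of the non-sharp and sharp cases reverses which variable is perturbed. In the non-sharp case $1/q+1/\tilde q<1$ the paper performs an atomic decomposition of $F,G$ in the \emph{time} variable and perturbs $(1/q,1/\tilde q)$ (Lemma~\ref{lem:timewisePerturbation}); this is exactly why the inequalities involving $q,\tilde q$ are made strict in the hypotheses, and the resulting gain $[\mu/\lambda]^{-\varepsilon}[\nu/\lambda]^{-\varepsilon}$ is summed via Lemma~\ref{lem:YoungSequences}. In the sharp case $1/q+1/\tilde q=1$ one cannot move the time exponents, so one instead decomposes $F(t),G(t)$ atomically in the \emph{spatial} variable $Z_{p'}$, $Z_{\tilde p'}$ and perturbs $(1/p,1/\tilde p)$ (Lemma~\ref{lem:spatialPerturbation}). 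This is why the sharp case requires all the $p,\tilde p$ inequalities to be strict and $2<p,\tilde p<\infty$; the final conditions $1/p\le 1/q$, $1/\tilde p\le 1/\tilde q$ enter only at the last step, to embed $\ell^{q'}\hookrightarrow\ell^{p'}$ and $\ell^{\tilde q'}\hookrightarrow\ell^{\tilde p'}$ after applying Lemma~\ref{lem:YoungSequences}, not as a Lorentz embedding in time.
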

 \begin{figure}[H]
 \begin{tikzpicture}[>=stealth,scale=0.5]
 	\node[shape=circle,inner sep=2pt, minimum size=2pt,label=left:$O$,draw]		(O) at (0,0) {};
 	\node[shape=circle,inner sep=2pt, minimum size=2pt,label=above:$A$,draw]	(A) at (4,7) {};
 	\node[shape=circle,inner sep=2pt, minimum size=2pt,label=right:$C$,draw]	(C) at (7,3) {};
 	\node[shape=circle,inner sep=2pt, minimum size=2pt,label=left:$B$,draw]		(B) at (4,3) {};
 	\node[shape=circle,inner sep=2pt, minimum size=2pt,label=above:$D$,draw]	(D) at (7,7) {};
 	\draw[->, very thick]	(O)-- (8,0) node[anchor=north] {$\frac{1}{p}$} ;
 	\draw[->, very thick]	(O) -- (0,8) node[anchor=east] {$\frac{1}{\tilde{p}}$};
 	\draw		(A) -- (B) -- (C) -- (D) -- (A) --(O) -- (C);
 	\draw[dashed] (B) -- (4,0) node[anchor=north] {$\frac{\sigma_1 -1}{2\sigma_1}$};
 	\draw[dashed] (B) -- (0,3) node[anchor=east] {$\frac{\sigma_2 -1}{2\sigma_2}$};
 	\draw[dashed] (A) -- (0,7) node[anchor=east] {$\frac{1}{2}$};
 	\draw[dashed] (C) -- (7,0) node[anchor=north] {$\frac{1}{2}$};
 \end{tikzpicture}
 \caption{This pictorial representation generalizes \cite[Figure~2,~p.~5]{Foschi2005}. The axes refer to the spatial integrability coefficients. The rectangle $ABCD$ corresponds to estimates found from factorization and the application of the Christ-Kiselev lemma up to endpoints. The origin relates to the dispersive estimate; one finds local estimates to hold in the wedge $AOCD$ by virtue of interpolation, restrictions on global estimates cut off estimates with too large spatial integrability coefficients.}
\label{fig:globalInhomogeneousEstimates}
 \end{figure}
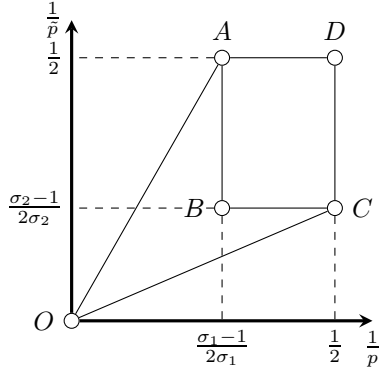
 We shall see in Section \ref{section:applications} that spherical symmetry and taking spherical averages yield generalized Strichartz estimates.
The main purpose of this article is to show that one can prove additional inhomogeneous generalized Strichartz estimates from homogeneous estimates in a unified framework. By additional inhomogeneous estimates we mean that these estimates do not follow from the homogeneous estimates and the Christ-Kiselev lemma. In the special case of additional homogeneous estimates stemming from spherical symmetry (cf. Section \ref{subsection:sphericalSymmetry}) this had been carried out previously in \cite{Ovcharov2012}. Our results extend the ones in \cite{Ovcharov2012} for Schr\"odinger-like equations because we work with the up to endpoints sharp range of homogeneous Strichartz estimates for spherically symmetric functions.
 We also find additional inhomogeneous estimates after taking spherical averages as an instance of Theorem \ref{thm:globalInhomogeneousEstimates} in Section \ref{subsection:sphericalAverages}. In this case our results appear to be completely new.\\
  The additional inhomogeneous estimates found after taking spherical averages can be applied to find a new well-posedness result for the fractional Schr\"odinger equation with time-dependent potential we will establish in Section \ref{subsection:fractionalSEQApplication}, where the following corollary provides the required additional estimates.
 \begin{corollary}
 \label{cor:inhomogeneousEstimatesApplication}
 There is some $\delta >0$, so that for $2-\delta \leq a<2$ there are coefficients $(q,q,\tilde{q},\tilde{q})$ which fulfill the requirements of Theorem \ref{thm:globalInhomogeneousEstimates} with a vanishing derivative parameter in a full neighbourhood of $q=\frac{2(n+a)}{n}$ if
 $ \frac{1}{q} + \frac{1}{\tilde{q}} = \frac{n}{n+a}. $\\
 In particular, we find the estimate 
 $$ \left\Vert \int_{-\infty}^t e^{i(t-\tau)D^a} F(\tau) d\tau \right\Vert_{L_t^q \mathcal{L}_r^q L_\omega^2} \lesssim_{n,q,\tilde{q}} \left\Vert F \right\Vert_{L_t^{\tilde{q}^\prime} \mathcal{L}_r^{\tilde{q}^\prime} L_\omega^2} $$
 to hold under the above assumptions.
 \end{corollary}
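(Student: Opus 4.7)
The plan is to apply Theorem \ref{thm:globalInhomogeneousEstimates} in its non-sharp form with the range spaces $Z_p = \mathcal{L}_r^p L_\omega^2$ introduced in Section \ref{subsection:sphericalAverages}. There it is shown that the propagators $\mathcal{U}_a$ admit generalized homogeneous Strichartz estimates in these range spaces with an extended decay parameter $\sigma^\prime > \sigma = n/2$ and an intrinsic index $r$ determined by the spherical-average gain. Set $p = q$, $\tilde p = \tilde q$ and take vanishing derivative parameter, i.e.\ $s = 0$ on the Sobolev side. The scaling identity $\beta_a(q,\tilde q,q,\tilde q,0) = 0$ then simplifies to $(1 + n/a)(1/q + 1/\tilde q) = n/a$, which is exactly $1/q + 1/\tilde q = n/(n+a)$, matching the hypothesis.

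Next I analyse the four admissibility inequalities from Theorem \ref{thm:globalInhomogeneousEstimates} at the classical Schr\"odinger endpoint $a=2$, $q = \tilde q = q_0 := 2(n+a)/n$. Substituting $\sigma_i = n/2 + \tau_i$, using the scaling constraint to rewrite $1/p + 1/\tilde p$, and incorporating the intrinsic index $r$ in the denominator $s-r$, the expression for $\mu$ reduces to an explicit rational function of $(a,q,\tilde q,\tau_1,\tau_2)$. A direct check shows that at $a=2$, $q=\tilde q = q_0$ one sits exactly on the boundary of the upper constraints $\mu \le p/2$ and $\mu \le \tilde p/2$, while the lower constraints $(\sigma_i-1)/\sigma_i \cdot p/2 \le \mu$ and the two $\sigma_i$-type lower bounds hold strictly for any fixed $\tau_1,\tau_2 \in (0,\sigma^\prime - \sigma)$.

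I then perturb: a careful computation of the partial derivatives of $\mu$ and of each inequality in the variables $(a,q,\tilde q,\tau_1,\tau_2)$ around the endpoint shows that decreasing $a$ below $2$ and simultaneously moving $(q,\tilde q)$ along the scaling manifold $1/q + 1/\tilde q = n/(n+a)$ opens an admissible wedge, provided the perturbation of $\tau_1,\tau_2$ is tuned to absorb the sign change coming from the $(a-2)$ contribution in the denominator of $\mu$. The non-zero intrinsic index $r$ of $\mathcal{L}_r^p L_\omega^2$ is what makes this wedge non-empty for $a<2$; without it the endpoint would only open for $a>2$. Continuity of the constraint functions in all parameters yields some $\delta>0$ and, for every $a\in [2-\delta,2)$, a full open neighbourhood of $q_0$ of admissible $q$.

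With these admissible coefficients, Theorem \ref{thm:globalInhomogeneousEstimates} delivers estimate \eqref{eq:globalEstimateII}; unfolding the definition of $Z^0_p = \mathcal{L}_r^p L_\omega^2$ gives the concrete bound claimed in the corollary. The hard step is the endpoint book-keeping in the perturbation above: several constraints become tight simultaneously at $(a,q,\tilde q) = (2,q_0,q_0)$, so one must identify the correct joint sign pattern in $(2-a)$, $(q - q_0)$ and $(\tau_1 - \tau_2)$, and verify that the derivative of $\mu$ along this direction points into the interior of the region in Figure \ref{fig:globalInhomogeneousEstimates}. Once this is established the remaining verifications are routine.
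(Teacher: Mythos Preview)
Your overall strategy coincides with the paper's: apply Theorem~\ref{thm:globalInhomogeneousEstimates} on the diagonal $p=q$, $\tilde p=\tilde q$ with $s=0$, observe that at $a=2$, $q=\tilde q=q_0=2(n+a)/n$ one lands exactly on the boundary $\mu=q/2$ while the remaining constraints are strict, and then argue by differentiation in $a$ that the constraints open up for $a$ slightly below $2$. That is precisely what the paper does.

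However, your explanation of \emph{why} the wedge opens contains a genuine confusion. You invoke a ``non-zero intrinsic index $r$ of $\mathcal{L}_r^p L_\omega^2$'' and claim that without it the endpoint would only open for $a>2$. This conflates two unrelated objects: the subscript $r$ in $\mathcal{L}_r^p$ is merely the radial variable in $L^p((0,\infty),r^{n-1}dr)$, not a structural parameter; the quantity $r=(a\sigma-n)/2$ appearing in the denominator of $\mu$ is a derivative parameter fixed by $a$ and the \emph{classical} decay $\sigma=n/2$, and is the same for any choice of range spaces. In particular $r=0$ at $a=2$, so it cannot be the mechanism that opens the wedge. The actual mechanism, as the paper makes explicit, is the computation
\[
\mu'(a{=}2)=\frac{q}{4}-\frac{1}{2(\sigma_1+\sigma_2-n)}\Bigl(2+\sigma_1+\sigma_2-\frac{n}{2}q^2\Bigr),\qquad q'(a{=}2)=\frac{2}{n},
\]
so that by sending $\sigma_1,\sigma_2\downarrow n/2$ one forces $\mu'(a{=}2)\to-\infty$ while $q'(a{=}2)$ stays bounded; hence $\mu<q/2$ for $a$ just below $2$. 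Your proposal to ``tune'' $\tau_1-\tau_2$ misses this: what matters is driving both $\sigma_i$ towards $n/2$, not their difference.

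A minor simplification: the paper stays on the diagonal $q=\tilde q$ throughout the derivative computation and only afterwards invokes continuous dependence to obtain a full neighbourhood in $q$. Your plan to perturb in $(a,q,\tilde q,\tau_1,\tau_2)$ jointly is more laborious and, given the misidentified mechanism above, may not close. Revisit the derivative computation with the correct role of $\sigma_1,\sigma_2$ and the argument will go through as in the paper.
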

  The fractional Schr\"odinger equation with potential was also considered in \cite{Cho2016} though only for spherically symmetric potentials and solutions. The main ingredient for the proof of \cite[Theorem~1.2,~p.~1908]{Cho2016} were additional inhomogeneous estimates for spherically symmetric solutions. We will recap the proof with slight modifications to see how the additional inhomogeneous estimates found after taking spherical averages from Corollary \ref{cor:inhomogeneousEstimatesApplication} allow us to drop assumptions on spherical symmetry.
Due to the perturbative nature of the range of the admissible coefficients we choose not to state the  integrability conditions explicitly, which was carried out in \cite{Cho2016}. We remark that the range provided by Corollary \ref{cor:inhomogeneousEstimatesApplication} is significantly smaller than the one from \cite[Corollary~1.1,~p.~1907]{Cho2016}.\\
Note that the situation is very different when one considers a Schr\"odinger equation with a potential which is time-independent. Instead of perceiving the solution as perturbation of the homogeneous equation via Duhamel's formula one typically proceeds by proving the dispersive estimate for the generator of the full time-evolution (cf. \cite{Mizutani2016, BoucletMizutani2016, RodnianskiSchlag2004} and the references therein).\\
In specific cases this method extends to time-dependent potentials (cf. \cite{RodnianskiSchlag2004}) but the potentials under consideration in Section \ref{subsection:fractionalSEQApplication} are in general not compatible.
\section{Preliminaries}
\label{section:preliminaries}
\subsection{Notation}
\label{subsection:notation}
In this section we explain basic notation which we will employ throughout the text. Further definitions which demand more explanation can be found in the next sections.\\
Let $(X,\mu)$ be a measure space and $E$ a Banach space. For $q \in [1,\infty)$ we consider
\begin{equation*}
\begin{split}
L^q(X,E) &= \left\{ f: X \rightarrow E, \; \mu\mbox{-measurable} \, | \, \Vert f \Vert_q < \infty \right\} \\ 
 \Vert f \Vert_q &= \left( \int_{X} \Vert f(x) \Vert_E^q d\mu(x) \right)^{1/q}
\end{split}
\end{equation*}
with the usual modification for $q = \infty$.\\
We shall work under the general assumption that the estimates we prove are supposed to hold merely for sufficiently smooth and decaying initial data. Therefore, we will typically ignore any questions on measurability, which become more delicate if the space $L^\infty(E)$ is involved. The smoothness and decay assumptions allow us to be a bit careless regarding vector-valued integration and interpolation, see also \cite[Theorem~5.1.2.,~p.~107]{Bergh1976}.
In the special case $(X,\mu) = (\mathbb{R}^n,\lambda)$ and $E = L^p(\mathbb{R}^n,\mathbb{C}) =: L^p(\mathbb{R}^n) =:L^p$, if there is no ambiguity about spatial dimension, we have
\begin{equation*}
\Vert f \Vert_{L_t^q(\mathbb{R}, L_x^p(\mathbb{R}^n))} = \left( \int_{\mathbb{R}} \left( \int_{\mathbb{R}^n} |f(t,x)|^p dx\right)^{q/p} dt \right)^{1/q}.
\end{equation*}
We denote $L_t^q(\mathbb{R}, L_x^p(\mathbb{R}^n)) =: L_t^q L_x^p$.\\
We work with the following convention of the Fourier transform for $f \in \mathcal{S}(\mathbb{R}^n)$
\begin{equation*}
\hat{f}(\xi) = \int_{\mathbb{R}^n} f(x) e^{-i x \xi} dx,
\end{equation*}
which extends by standard means into the space of tempered distributions $\mathcal{S}^\prime(\mathbb{R}^n)$. For details on some basic assertions which are made in the following without further comment see e.g. \cite[Chapter~0,~pp.~1-38]{Sogge1993}.\\
We denote the sphere embedded into Euclidean space with $L^p$-spaces defined via surface measure $d \sigma$
\begin{equation*}
\mathbb{S}^{n-1} = \left\{ x \in \mathbb{R}^n \, | \, \Vert x \Vert_2 = 1 \right\}, \;  L^p(\mathbb{S}^{n-1},d\sigma) =: L_\omega^p.
\end{equation*}
Once again, we make use of the latter variation, if there is no confusion about spatial dimension.\\
We also introduce a shorthand notation for the radial part of functions in Euclidean space, that is $\mathcal{L}_r^p := L^p((0,\infty), \, r^{n-1} dr)$.\\
We define homogeneous and inhomogeneous Sobolev spaces in Euclidean space by powers of the operators $D=(-\Delta)^{1/2}$ and $\Lambda = (1-\Delta)^{1/2}$ as
\begin{equation*}
\begin{split}
\dot{H}^s(\mathbb{R}^n) = \left\{ f \in \mathcal{S}^\prime(\mathbb{R}^n)/\mathcal{P} \, | \, \Vert f \Vert_{\dot{H}^s} < \infty \right\}, \; \Vert f \Vert_{\dot{H}^s} = \Vert D^s f \Vert_{L^2}, \\
H^s(\mathbb{R}^n) = \left\{ f \in \mathcal{S}^\prime(\mathbb{R}^n) \, | \, \Vert f \Vert_{H^s} < \infty \right\}, \; \Vert f \Vert_{H^s} = \Vert \Lambda^s f \Vert_{L^2},
\end{split}
\end{equation*}
where $\mathcal{P}$ denotes the set of polynomials (i.e. tempered distributions with Fourier support concentrated at the origin).\\
In the following let $\psi:\mathbb{R}^n \rightarrow \mathbb{R}$ denote a fixed smooth spherically symmetric and in terms of the radial variable monotonically decreasing function satisfying $\psi(x) =1, \, |x| \leq 1$ and $\psi(x) = 0, \, |x| \geq 2$. We set $\chi:\mathbb{R}^n \rightarrow \mathbb{R}, \; \chi(x)= \psi(x)-\psi(2x)$ to define a suitable bump function with support around $1$. For $f \in \mathcal{S}^\prime(\mathbb{R}^n)$ we define the frequency localization operators
\begin{equation*}
(P_N f)^{\hat{\,}}(\xi) = \chi(\xi/N) \hat{f}(\xi).
\end{equation*}
Capital letters will denote dyadic numbers and we will use the notation $\tilde{P}_N = P_{N/2} + P_N + P_{2N}$.\\
For Besov spaces we follow the conventions of \cite{Grafakos2009} and for angular derivatives we make use of the following notation: For $1 \leq i < j \leq n$ we set $\Omega_{ij}= i(x_i \partial_j - x_j \partial_i)$ to denote the generators of rotations, $\Delta_\omega = \sum_{i<j} \Omega_{ij}^2$ denotes the Laplace-Beltrami operator on the sphere extended to Euclidean space and $\Lambda_\omega = (1-\Delta_\omega)^{1/2}$ denotes the inhomogeneous angular derivative. For basic results see e.g. \cite{Sterbenz2005, Jiang2012}.\\
In estimates we use the notation $C_{a,b,\ldots} = C(a,b,\ldots)$, indicating that the generic constant $C$ depends only on the parameters $a,b,\ldots$. We also employ the shorthand notation $\lesssim_{a,b,\ldots}$. The constant is allowed to change at each occurrence, though.  More sophisticated dependencies will be mentioned properly.\\
\subsection{Setup}
\label{subsection:setup}
We consider generalized Strichartz estimates beyond the classical range originating from spherical symmetry or from weakening integrability in the spherical coordinates. We shall start from some basic assumptions on the function spaces we are working with and generalized homogeneous Strichartz estimates.\\
In order to generically write up the homogeneous generalized Strichartz estimates, let $(Z_p)_{p \in [1,\infty]}$ denote a family of Banach spaces of tempered distributions in $\mathbb{R}^n$. In the following let $n$ denote the fixed spatial dimension; we are not interested in comparing estimates for different spatial dimensions, but since decay parameters and scaling also depend on the spatial dimension, we keep track of it.\\
We make the following definition, so that the function spaces under consideration behave reasonably under frequency localization and scaling:
\begin{definition}[Compatibility property]
We say that the family $(Z_p)_{p \in [1,\infty]}$ of function spaces has the compatibility property, if we have
\begin{enumerate}
\item[(i)] the continuous embedding $Z_p \hookrightarrow \mathcal{S}^\prime(\mathbb{R}^n)$,
\item[(ii)] the continuity of frequency localization:
\begin{equation}
\label{eq:compFrequLoc}
\forall R>0, \, p \in [1,\infty]: \, \Vert P_R \Vert_{Z_p \rightarrow Z_p} < C_n, \; \left(P_R f\right)\hat{\,}(\xi) = \chi(\xi/R) \hat{f}(\xi),
\end{equation}
\item[(iii)] and a vector-valued $L^p$-structure: There is a separable Hilbert space $H$, such that we have the identification
\begin{eqnarray}
\label{eq:vectorValuedRangeSpace}
Z_p = L^p(((0,\infty),r^{n-1} dr),H).
\end{eqnarray}
\end{enumerate}
\end{definition}
Note that from a change of variables follows the identity
\begin{eqnarray*}
\label{eq:scalingNorm}
\forall p \in [1,\infty], \, \lambda > 0: \; \Vert f(\lambda \cdot) \Vert_{Z_p} = \lambda^{-\frac{n}{p}} \Vert f \Vert_{Z_p},
\end{eqnarray*}
which we shall refer to as the $L^p$-dilation property.\\
Also note that the independence of $R$ in \eqref{eq:compFrequLoc} follows from a scaling argument and that from \eqref{eq:vectorValuedRangeSpace} follows the duality relation 
\begin{eqnarray}
\label{eq:dualityRelation}
\left(Z_p \right)^\prime \simeq Z_{p^\prime}, Z_p \mbox{ separable for }p \in [1,\infty), \mbox{ where } \frac{1}{p}+\frac{1}{p^\prime}=1.
\end{eqnarray}
In our applications the $Z_p$-spaces are the $L^p(\mathbb{R}^n)$-spaces of spherically symmetric functions, when we consider spherical symmetry (i.e. $H=\mathbb{C}$ in \eqref{eq:vectorValuedRangeSpace}), or spaces of the kind $\mathcal{L}_r^p L_\omega^q$, when we consider weakened integrability in the spherical coordinates (i.e. $H= L_\omega^2$ in \eqref{eq:vectorValuedRangeSpace}). We make the following definition on adding derivatives:
\begin{equation*}
Z^s_p = \left\{ f \in \mathcal{S}^\prime(\mathbb{R}^n)/\mathcal{P} \, | \, \Vert f \Vert_{Z^s_p} < \infty \right\}
\end{equation*}
with the Besov-like norm
\begin{equation*}
\Vert f \Vert_{Z^s_p} = \left( \sum_N N^{2s} \Vert P_N f \Vert^2_{Z_p} \right)^{1/2}.
\end{equation*}
Working with these norms has the benefit, that one can conclude estimates from frequency-localized versions although one does not necessarily have a Littlewood-Paley decomposition of the considered spaces. 
 When we consider operators between the spaces, we shall frequently start with a frequency localized estimate.
We also consider the following norms
 \begin{equation*}
 \Vert F \Vert_{Z^s_{p,q}} = \left( \sum_N N^{2s} \Vert P_N F \Vert_{L_t^q Z_p}^2 \right)^{1/2}.
 \end{equation*}
Also, we use the nomenclature of an 'extended decay parameter' which is supposed to be understood morally: The decay estimates \eqref{eq:untruncatedDecayEstimate}, \eqref{eq:truncatedDecayEstimate} are not improved. But for instance in the case of spherically symmetric functions the sharp decay only holds for a relatively thin set (cf. \cite{Sterbenz2005}), which makes the proof of the generalized homogeneous Strichartz estimates possible and with the ordinary decay parameter giving rise to the classical sharp line of integrability coefficients (cf. \cite{TaoKeel1998}) we want this phenomenon to extend for generalized Strichartz estimates.
With range spaces, which have the compatibility property, we formulate the generalized homogeneous Strichartz estimates as follows. 
\begin{definition}[Generalized homogeneous Strichartz estimates]
Let $a \geq 1$ and $\mathcal{U}_a$ as above. We say that $\mathcal{U}_a$ admits generalized homogeneous Strichartz estimates with range spaces $(Z_p)_{p \in [1,\infty]}$, which are a family of spaces of  tempered distributions in $\mathbb{R}^n$ with the compatibility property, and with extended decay parameter $\sigma^\prime$\footnote{Again, $\sigma^\prime$ will typically depend on $a$ and $n$, but we suppress the dependence for the sake of brevity for the same reasons we suppress the dependence considering $\sigma$.}, if $1\leq\sigma<\sigma^\prime$\footnote{The requirement $\sigma \geq 1$ is a technical restriction to avoid endpoints with infinite space integrability. Typically, the estimates for these endpoints are ruled out and must be excluded by force.} and we find the estimate
\begin{equation}
\label{eq:genStrichartzEstimateFrequLoc}
\left\Vert P_N e^{it D^a} u_0 \right\Vert_{L_t^q Z_p} \lesssim_{n,p,q} N^{-s} \Vert u_0 \Vert_{L^2}
\end{equation}
to hold for any $N \in 2^{\mathbb{Z}}$ with\footnote{For $1/q \leq \sigma (1/2-1/p)$ we have the ordinary Strichartz estimates and typically, we do not have the generalized Strichartz estimates at some point for the critical decay parameter $\sigma^\prime$, or not all.}
\begin{equation}
\label{eq:genStrichartzCond}
\frac{1}{q} = \tau \left( \frac{1}{2} - \frac{1}{p} \right), \; \tau \in (\sigma, \sigma^\prime), \; s= - \frac{n}{2} + \frac{n}{p} + \frac{a}{q} , \; q,p \geq 2.
\end{equation}
\end{definition}
In the following we shall also refer to generalized homogeneous Strichartz estimates as generalized Strichartz estimates.\\
Furthermore, we note that by duality follows from \eqref{eq:genStrichartzEstimateFrequLoc}
\begin{equation*}
\left\Vert P_N \int_{-\infty}^{\infty} e^{-i \tau D^a} F(\tau) d\tau \right\Vert_{L^2} \lesssim_{n,\tilde{p},\tilde{q}} N^{-\tilde{s}} \Vert F \Vert_{L_t^{\tilde{q}^\prime} Z_{\tilde{p}^\prime}}
\end{equation*}
and taking the two estimates together yields
\begin{equation}
\label{eq:globalInhomogEst}
\left\Vert P_N \int_{-\infty}^{\infty} e^{i(t-s)D^a} F(s) ds \right\Vert_{L_t^{\tilde{q}} Z_{\tilde{p}}} \lesssim_{n,p,q,\tilde{p},\tilde{q}} N^{-(s+\tilde{s})} \Vert F \Vert_{L_t^{q^\prime} Z_{p^\prime}}.
\end{equation}
We work under the convention that estimates of the kind \eqref{eq:genStrichartzEstimateFrequLoc} and \ref{eq:globalInhomogEst} hold for any $N \in 2^{\mathbb{Z}}$ without further comment. Also note that in any of the three estimates we can of course assume the function under consideration to be frequency localized around $N$.\\

\section{Proof of the global inhomogeneous estimates}
\label{section:proof}
Starting from generalized Strichartz estimates, we shall follow the strategy already developed in \cite{Foschi2005}:
\begin{enumerate}
\item[1.] Finding temporally delayed and localized inhomogeneous estimates with normalized temporal support, which will be done in Section \ref{subsection:localInhomogeneousEstimates},
\item[2.] finding more local estimates by means of a scaling transform in Section \ref{subsection:scalingSymmetry},
\item[3.] perceiving global inhomogeneous estimates as bilinear estimates, which can be decomposed into local estimates by means of a Whitney decomposition,
\item[4.] summing the local estimates, which becomes possible through atomic decompositions of the involved functions. This will be done in Section \ref{subsection:globalInhomogeneousEstimates}.
\end{enumerate}
 \subsection{Local estimates}
 \label{subsection:localInhomogeneousEstimates}
 Let $I$ and $J$ be two time intervals of unit length $|I| = |J| = 1$, which are separated, so that $d=$ dist$(I,J)  \sim 1$ when $\sup I < \inf J$ and let us consider the local estimates
\begin{equation}
\label{eq:localInhomogEstII}
\left\Vert \int_{-\infty}^\infty e^{i(t-\tau)D^a} F(\tau) \, d\tau \right\Vert_{L_t^{\tilde{q}}(J;Z^{\tilde{s}}_{\tilde{p}})} \leq C_{n,p,q,\tilde{p},\tilde{q}} \Vert F \Vert_{L_t^{q^\prime}(I;Z^{-s}_{p^\prime})},
\end{equation}
which are meant to hold for any time intervals with the properties described above.\\
We note that such an estimate also depends on $|I|$, $|J|$ and $d$. For the moment we suppress the dependence, but we have to keep track of it, when we consider intervals of different shape, which will be done in the next sections.\\
Assuming $F$ to be supported in $I$, we find that $TT^*$ and $\left(TT^* \right)_R$ coincide.
The aim of this section is to find as many of these local estimates as possible. In the following lemma we observe that it is enough to consider frequency localized variants if $q,\tilde{q} \geq 2$ due to Minkowski's inequality.
\begin{lemma}
\label{lem:localInhomogEstII}
Suppose that the estimate 
\begin{equation*}
\label{eq:localinhomogest}
\left\Vert P_N \int_{-\infty}^t e^{i D^a (t-s)} F(s) ds \right\Vert_{L_t^{\tilde{q}}(J;Z_{\tilde{p}})} \leq C_{n,p,q,\tilde{p},\tilde{q}} N^{-(s+\tilde{s})} \Vert F \Vert_{L_t^{q^\prime}(I;Z_{p^\prime})}
\end{equation*}
holds for some $q,\tilde{q},p,\tilde{p},s$ and any intervals $I$ and $J$ of unit length, which are separated so that $d=$dist$(I,J)\sim 1$ and $\sup I < \inf J$ with $q,\tilde{q} \geq 2$.\\
Then we also find the estimate \eqref{eq:localInhomogEstII} to hold.
\end{lemma}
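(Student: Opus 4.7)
I would prove the lemma by a straightforward two-sided application of Minkowski's inequality to the frequency decomposition encoded in the Besov-like $Z^s_p$-norms, exploiting the hypothesis $q,\tilde q\geq 2$ precisely twice.

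The plan is to expand both sides according to the definition
$\|f\|_{Z^\sigma_r}=(\sum_N N^{2\sigma}\|P_N f\|_{Z_r}^2)^{1/2}$ and commute the $P_N$ through the propagator. First I would estimate the left-hand side of \eqref{eq:localInhomogEstII} from above: writing the $Z^{\tilde s}_{\tilde p}$-norm as a square function in $N$ and using Minkowski's integral inequality in the form $\|\cdot\|_{L^{\tilde q}_t\ell^2_N}\leq\|\cdot\|_{\ell^2_N L^{\tilde q}_t}$ (valid for $\tilde q\geq 2$), I would obtain
\begin{equation*}
\Bigl\|\int e^{i(t-\tau)D^a}F(\tau)\,d\tau\Bigr\|_{L^{\tilde q}_t(J;Z^{\tilde s}_{\tilde p})}
\leq\Bigl(\sum_N N^{2\tilde s}\bigl\|P_N\!\!\int e^{i(t-\tau)D^a}F(\tau)\,d\tau\bigr\|_{L^{\tilde q}_t(J;Z_{\tilde p})}^2\Bigr)^{1/2}.
\end{equation*}

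Then I would apply the frequency-localized hypothesis dyadic-block by dyadic-block. Since $P_N$ commutes with $e^{i(t-\tau)D^a}$, the assumption gives
\begin{equation*}
\bigl\|P_N\!\!\int e^{i(t-\tau)D^a}F(\tau)\,d\tau\bigr\|_{L^{\tilde q}_t(J;Z_{\tilde p})}\lesssim N^{-(s+\tilde s)}\|P_N F\|_{L^{q'}_t(I;Z_{p'})},
\end{equation*}
(applied to $P_N F$, which is still supported in $I$). Combining the $N^{2\tilde s}$ weight with $N^{-2(s+\tilde s)}$ yields a remaining factor of $N^{-2s}$, producing the expression $\bigl(\sum_N N^{-2s}\|P_N F\|_{L^{q'}_t(I;Z_{p'})}^2\bigr)^{1/2}$.

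Finally I would invert Minkowski on the input side. Since $q\geq 2$ implies $q'\leq 2$, Minkowski's inequality gives the reverse ordering $\|\cdot\|_{\ell^2_N L^{q'}_t}\leq\|\cdot\|_{L^{q'}_t\ell^2_N}$, and the right-hand side is exactly $\|F\|_{L^{q'}_t(I;Z^{-s}_{p'})}$, producing the claimed estimate \eqref{eq:localInhomogEstII}. The argument is entirely routine; the only subtlety to watch is that Minkowski must go in opposite directions on the two sides, and it is precisely the dual hypotheses $\tilde q\geq 2$ and $q\geq 2$ (equivalently $q'\leq 2$) that make both steps legitimate. The vector-valued $L^p$-structure from \eqref{eq:vectorValuedRangeSpace} ensures that interchange of norms poses no measurability or integration-theoretic issue beyond scalar Minkowski, so no genuine obstacle arises.
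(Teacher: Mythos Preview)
Your proposal is correct and is precisely the argument the paper has in mind: the paper does not write out a proof of this lemma but simply attributes it to Minkowski's inequality, and your two applications of Minkowski (one with $\tilde q\geq 2$ on the output side, one with $q'\leq 2$ on the input side) are exactly that argument made explicit. The only cosmetic point is that after commuting $P_N$ through the propagator one should, strictly speaking, replace $F$ by $\tilde P_N F$ before invoking the hypothesis (since $P_N$ is not idempotent); the compatibility property \eqref{eq:compFrequLoc} then returns $\|\tilde P_N F\|_{Z_{p'}}$ to $\|P_N F\|_{Z_{p'}}$ up to a harmless constant, so nothing is lost.
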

In order to find local estimates, we start from the dispersive estimate:
\begin{equation*}
\label{eq:unitFrequenciesDispersion}
\Vert P_1 e^{it D^a} u_0 \Vert_{L_x^{\infty}} \leq C_{n} |t|^{-\sigma} \Vert u_0 \Vert_{L_x^1} \; \; (t \neq 0).
\end{equation*}
By means of a scaling transform we find
\begin{equation*}
\label{eq:generalFrequenciesDispersion}
\Vert P_N e^{itD^a} u_0 \Vert_{L_x^\infty} \leq C_n |t|^{-\sigma} N^{n-a\sigma} \Vert u_0 \Vert_{L_x^1} \; \; (t \neq 0).
\end{equation*}
Integrating the above inequality yields the following local estimate:
\begin{equation}
\label{eq:localDispEst}
\left\Vert P_N \int_{-\infty}^t e^{i(t-s)D^a} F(s) ds \right\Vert_{L_t^\infty(J; L_x^\infty)} \leq C_{n} N^{n-a\sigma} \left\Vert F \right\Vert_{L_t^1(I; L_x^1)}
\end{equation}
In the following we set $$r= \frac{a\sigma-n}{2}.$$
We would like to combine \eqref{eq:localDispEst} with the estimates we find from \eqref{eq:globalInhomogEst}; however, we have to require that these estimates still hold in the $Z_p$-spaces, which leads us to the following definition:
\begin{definition}[Generalized dispersive estimate]
Suppose that $\mathcal{U}_a$ admits generalized Strichartz estimates with range spaces $(Z_p)_{p \in [1,\infty]}$ and extended decay parameter $\sigma^\prime$. We say that these generalized Strichartz estimates admit a generalized dispersive estimate, if we have
\begin{equation}
\label{eq:genDispEst}
\left\Vert P_N \int_{-\infty}^t e^{iD^a(t-\tau)} F(\tau) d\tau \right\Vert_{L_t^\infty(J; Z_\infty)} \leq C_{n} N^{n-a\sigma} \left\Vert F \right\Vert_{L_t^1(I; Z_1)},
\end{equation}
which is supposed to hold for any intervals $I$ and $J$ of unit length $|I| = |J| = 1$, which are separated so that dist$(I,J) \sim 1$.
\end{definition}
We will also make use of the interpolation identity
\begin{equation}
\label{eq:interpolationIdentity}
(Z_p, Z_q)_{[\theta]} = Z_u, \; \frac{1}{u} = \frac{1-\theta}{p} + \frac{\theta}{q}, \; q,p \in [1, \infty], \; \theta \in [0,1],
\end{equation}
but this identity follows for $p \neq \infty $, $q \neq \infty $ from the $Z_p$-spaces being vector-valued $L^p$-spaces; if one of the coefficients is infinite, we have to take into account the regularity and decay assumption, see again \cite[Theorem~5.1.2.,~p.~107]{Bergh1976}. The same reasoning holds for further nested $L^p$-spaces.\\
We observe that in \eqref{eq:localinhomogest} there is some ambiguity between $s$ and $\tilde{s}$ in the sense that the estimate actually only depends on $s+\tilde{s}$. Therefore, we will only consider estimates of the kind
\begin{equation}
\label{eq:localInhomogEstIII}
\left\Vert P_N \int_{-\infty}^t e^{iD^a(t-\tau)} F(\tau) d\tau \right\Vert_{L_t^{\tilde{q}}(J;Z_{\tilde{p}})} \lesssim_{n,p,q,\tilde{p},\tilde{q}} N^{-2s} \Vert F \Vert_{L_t^{q^\prime}(I;Z_{p^\prime})}
\end{equation}
and
\begin{equation}
\label{eq:localInhomogEstIV}
\left\Vert \int_{-\infty}^t e^{iD^a(t-\tau)} F(\tau) d\tau \right\Vert_{L_t^{\tilde{q}}(J;Z^{s}_{\tilde{p}})} \lesssim_{n,p,q,\tilde{p},\tilde{q}} \Vert F \Vert_{L_t^{q^\prime}(I;Z^{-s}_{p^\prime})}.
\end{equation}
When we perform interpolation steps to find global estimates in Section  \ref{subsection:globalInhomogeneousEstimates}, the above representations will be advantageous. We work out the local estimates below using the methods from \cite{Foschi2005}: That is interpolating the estimates from factorization with the dispersive estimate and finally using H\"older's inequality in time. The following theorem is an extension to \cite[Theorem~1.12.,~p.~4]{Foschi2005}; the proof is complicated from the necessity to keep in mind the derivative parameters.
\begin{theorem}[Local inhomogeneous estimates]
\label{thm:localEstimates}
Suppose that the family of linear operators $\mathcal{U}_a$ admits generalized homogeneous Strichartz estimates with range spaces $(Z_p)_{p \in [1,\infty]}$, with extended decay parameter $\sigma^\prime$ and which admit a generalized dispersive estimate.\\
Then we find the estimates \eqref{eq:localInhomogEstIII} and \eqref{eq:localInhomogEstIV} to hold if
\begin{align*}
\exists \; \sigma_1, \sigma_2 \in (\sigma,\sigma^\prime):\\
1 \leq &\mu = \frac{(a/2) \left( \sigma_1/2 + \sigma_2/2 - \sigma \right)}{s-r + \left( (a \sigma_1 -n)/p + (a \sigma_2 -n)/\tilde{p} \right)/2} < \infty \\
\frac{\sigma_1 - 1}{\sigma_1} \frac{p}{2} \leq \mu \leq \frac{p}{2} &\; \; \; \; \frac{\sigma_2 -1}{\sigma_2} \frac{\tilde{p}}{2} \leq \mu \leq \frac{\tilde{p}}{2} \nonumber \\
\frac{(\sigma_1/2)}{(1/q) + (\sigma_1/p)} \leq \mu &\; \; \; \; \frac{(\sigma_2/2)}{(1/\tilde{q}) + (\sigma_2/\tilde{p})} \leq \mu \nonumber
\end{align*}
\end{theorem}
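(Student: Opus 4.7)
The plan is to follow Foschi's bilinear interpolation argument, carried out here in the generalized range-space setting. By Lemma~\ref{lem:localInhomogEstII} it suffices to prove the frequency-localized version \eqref{eq:localInhomogEstIII}; the Besov-like estimate \eqref{eq:localInhomogEstIV} is then recovered by squaring in the dyadic parameter $N$ and applying Minkowski's inequality, for which the assumption $q,\tilde{q}\geq 2$ built into Lemma~\ref{lem:localInhomogEstII} is exactly what is needed.

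Two input bounds drive the argument. Composing two generalized homogeneous Strichartz estimates with admissible data $(q_i,p_i,\sigma_i)$, $\sigma_i\in(\sigma,\sigma^\prime)$, and restricting time integration to the intervals $I$, $J$ yields, via \eqref{eq:globalInhomogEst}, the factorized bilinear estimate
$$\|P_N TT^* F\|_{L_t^{q_2}(J;Z_{p_2})}\lesssim N^{-(s_1+s_2)}\|F\|_{L_t^{q_1^\prime}(I;Z_{p_1^\prime})},\qquad s_i=-\tfrac{n}{2}+\tfrac{n}{p_i}+\tfrac{a}{q_i},$$
while the generalized dispersive estimate \eqref{eq:genDispEst} provides the dual endpoint
$$\|P_N TT^* F\|_{L_t^\infty(J;Z_\infty)}\lesssim N^{-2r}\|F\|_{L_t^1(I;Z_1)},\qquad r=\tfrac{a\sigma-n}{2}.$$
Bilinear Riesz--Thorin interpolation between these two estimates with weight $\theta=1-1/\mu\in[0,1)$ on the dispersive side---permissible for the vector-valued $L^p$-spaces $Z_p$ by the compatibility property~(iii) together with the interpolation identity~\eqref{eq:interpolationIdentity}---produces a bound mapping $L_t^{(\mu q_1)^\prime}(I;Z_{(\mu p_1)^\prime})\to L_t^{\mu q_2}(J;Z_{\mu p_2})$ with $N$-exponent $-2r(1-1/\mu)-(s_1+s_2)/\mu$. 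Relabelling $q=\mu q_1$, $p=\mu p_1$, $\tilde{q}=\mu q_2$, $\tilde{p}=\mu p_2$ and demanding that this exponent coincide with $-2s$ leads, after a short algebraic manipulation using the Strichartz relation $1/q_i=\sigma_i(1/2-1/p_i)$, to precisely the closed form for $\mu$ stated in the theorem. The admissibility requirements $p_i\geq 2$ and $q_i\geq 2$ then translate into the two-sided bounds $\frac{\sigma_1-1}{\sigma_1}\frac{p}{2}\leq\mu\leq\frac{p}{2}$ and $\frac{\sigma_2-1}{\sigma_2}\frac{\tilde{p}}{2}\leq\mu\leq\frac{\tilde{p}}{2}$. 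A final application of H\"older's inequality in time on the unit-length intervals $I$, $J$ relaxes the time exponents and produces the remaining two conditions $\mu\geq\frac{\sigma_1/2}{1/q+\sigma_1/p}$ and $\mu\geq\frac{\sigma_2/2}{1/\tilde{q}+\sigma_2/\tilde{p}}$.

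The main technical obstacle I expect is the algebraic bookkeeping: carefully tracking the derivative index (which is split symmetrically as $-2s$ in \eqref{eq:localInhomogEstIII}, although the individual Strichartz derivatives $s_i$ enter asymmetrically) so as to verify the exact form of $\mu$ written in the statement. A secondary subtlety is invoking vector-valued interpolation at the endpoint spaces $Z_1$ and $Z_\infty$, which is the reason the paper restricts throughout to sufficiently smooth and decaying inputs and appeals to the vector-valued interpolation machinery cited around \eqref{eq:interpolationIdentity}. Once the frequency-localized estimate is in hand, reduction via Lemma~\ref{lem:localInhomogEstII} and summation in $N$ through the square-function definition of $Z^s_p$ are routine.
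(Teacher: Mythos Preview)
Your outline coincides with the paper's argument in almost every respect: the two inputs (factorized Strichartz bound and generalized dispersive estimate), complex interpolation with parameter $\theta$ along the segment joining them, identification $\mu=1/\theta$, and the final H\"older step in time. The algebra you anticipate is exactly what the paper carries out.

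There is, however, one genuine gap concerning the order in which you pass from the frequency-localized estimate \eqref{eq:localInhomogEstIII} to the Besov-type estimate \eqref{eq:localInhomogEstIV}. You propose to first establish \eqref{eq:localInhomogEstIII} in full generality (interpolation \emph{followed by} H\"older in time) and only then lift to \eqref{eq:localInhomogEstIV} via Lemma~\ref{lem:localInhomogEstII}. But that lemma requires $q,\tilde q\geq 2$, and once H\"older has been applied the relaxed time exponents may well satisfy $1/q>1/2$ or $1/\tilde q>1/2$; indeed the hypotheses of the theorem impose no lower bound on $q,\tilde q$. The paper flags this explicitly: ``Lifting is no longer possible after applying H\"older's inequality in general.'' The remedy is to swap the order. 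Immediately after interpolation the time exponents are $\theta/Q,\,\theta/\tilde Q\leq 1/2$ (since $Q,\tilde Q\geq 2$ and $\theta\leq 1$), so Lemma~\ref{lem:localInhomogEstII} applies at that stage and yields \eqref{eq:localInhomogEstIV} with these intermediate exponents. Only afterwards does one apply H\"older in $t$---now directly inside the $Z^s_p$-norms, which is harmless because those are $\ell^2$-sums of $L^q_t Z_p$-norms---to reach the full range of $(q,\tilde q)$. With this reordering your proof goes through and matches the paper's.
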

\begin{proof}
Let us set $\mathcal{E}_{loc} = \left\{ (1/q,1/p,1/\tilde{q},1/\tilde{p},s) \in [0,1]^4 \times \mathbb{R} \, | \, \right. $ \eqref{eq:localInhomogEstIII} is valid $\left. \right\}$.
We have $(0,0,0,0,r) \in \mathcal{E}_{loc}$ by virtue of the dispersive estimate \eqref{eq:localDispEst}.
We also observe that if the generalized homogeneous Strichartz estimates are valid, that is $(Q,P,S)$ and $(\tilde{Q},\tilde{P},\tilde{S})$ satisfy \eqref{eq:genStrichartzCond}, then we find that $(1/Q,1/P,1/\tilde{Q},1/\tilde{P},(S+\tilde{S})/2) \in \mathcal{E}_{loc}$ due to \eqref{eq:globalInhomogEst} and localization in time.\\
That means we have for some $\sigma_1, \sigma_2 \in (\sigma, \sigma^\prime)$
\begin{equation}
\label{eq:localInhomogStartI}
\begin{split}
&\frac{1}{Q} = \sigma_1 \left( \frac{1}{2} - \frac{1}{P} \right), \; \; \; S = -n \left( \frac{1}{2} - \frac{1}{P} \right) + \frac{a}{Q}, \\
 &\frac{1}{\tilde{Q}} = \sigma_2 \left( \frac{1}{2} - \frac{1}{\tilde{P}} \right), \; \; \; \tilde{S} = -n \left( \frac{1}{2} - \frac{1}{\tilde{P}} \right) + \frac{a}{\tilde{Q}}, \; \; \; 0 \leq \frac{1}{Q}, \frac{1}{\tilde{Q}}, \frac{1}{P}, \frac{1}{\tilde{P}} \leq \frac{1}{2}.
 \end{split}
\end{equation}
Note that $S$ and $\tilde{S}$ only depend on $\frac{1}{P}, \; \frac{1}{\tilde{P}}$, respectively, that is
\begin{equation*}
S= (a \sigma_1 - n) \left( \frac{1}{2} - \frac{1}{P} \right), \; \; \tilde{S} = (a \sigma_2 - n) \left( \frac{1}{2} - \frac{1}{\tilde{P}} \right).
\end{equation*}
Second, we can take the convex hull of these points with the point $(0,0,0,0,r)$ by virtue of the interpolation property. This gives rise to the wedge in Figure \ref{fig:globalInhomogeneousEstimates}. Those points are of the form $$(\theta/Q, \theta/P, \theta/\tilde{Q}, \theta/\tilde{P}, (1-\theta) r + \theta \frac{(S+\tilde{S})}{2}).$$
Before we apply H\"older's inequality to find more admissible coefficients with respect to time integrability, we lift the estimates of the kind \eqref{eq:localInhomogEstIII} to estimates of the kind \eqref{eq:localInhomogEstIV} by invoking Lemma \ref{lem:localInhomogEstII}, which is possible because $\frac{\theta}{Q}, \frac{\theta}{\tilde{Q}} \leq \frac{1}{2}$. Lifting is no longer possible after applying H\"older's inequality in general.\\
Finally, we apply H\"older's inequality to find points of the form 
\begin{equation}
\begin{split}
(1/q,1/p,1/\tilde{q},1/\tilde{p},s) \mbox{ where }
\frac{1}{q} \geq \frac{\theta}{Q}, \; \; \; \frac{1}{p} = \frac{\theta}{P}, \\
 \frac{1}{\tilde{q}} \geq \frac{\theta}{\tilde{Q}}, \; \; \; \frac{1}{\tilde{p}} = \frac{\theta}{\tilde{P}}, \; \; \; s=(1-\theta)r + \theta (S+\tilde{S})/2,
\end{split}
\end{equation}
when $(Q,P,S,\tilde{Q},\tilde{P},\tilde{S})$ satisfies \eqref{eq:localInhomogStartI}. Our aim is to eliminate the dependence from the initial values. We can already eliminate $P, \tilde{P}$:
\begin{align*}
\frac{\theta}{Q} = \sigma_1 \left( \frac{\theta}{2} - \frac{1}{p} \right)&\,&  \frac{\theta}{\tilde{Q}} = \sigma_2 \left( \frac{\theta}{2} - \frac{1}{\tilde{p}} \right) \\
0 \leq \frac{\theta}{Q}, \, \frac{\theta}{\tilde{Q}} \leq \frac{\theta}{2}&\,&  0 \leq \frac{1}{p}, \frac{1}{\tilde{p}} \leq \frac{\theta}{2}, \, \theta \in (0,1] \\
\frac{1}{q} \geq \frac{\theta}{Q}&\,& \, \frac{1}{\tilde{q}} \geq \frac{\theta}{\tilde{Q}} \\
s=r+\theta \left( \frac{S+\tilde{S}}{2} - r \right) &\,& \; \\
\theta S = (a \sigma_1 -n) \left( \frac{\theta}{2} - \frac{1}{p} \right) &\,& \theta \tilde{S} = (a \sigma_2 - n) \left( \frac{\theta}{2} - \frac{1}{\tilde{p}} \right)
\end{align*}
Next, we can eliminate $Q, \tilde{Q}$ and find $\theta$ from the last equations:
\begin{align*}
\sigma_1 \left( \frac{\theta}{2} - \frac{1}{p} \right) \leq \frac{\theta}{2} \; &\;& \; \sigma_2 \left( \frac{\theta}{2} - \frac{1}{\tilde{p}} \right) \leq \frac{\theta}{2} \\
0 \leq \frac{1}{p}, \frac{1}{\tilde{p}} \leq \frac{\theta}{2} \; &\;& \; 0 < \theta \leq 1 \\
\frac{1}{q} \geq \sigma_1 \left( \frac{\theta}{2} - \frac{1}{p} \right) \; &\;& \;
\frac{1}{\tilde{q}} \geq \sigma_2 \left( \frac{\theta}{2} - \frac{1}{\tilde{p}} \right) \\
\theta = \frac{s-r + \left( (a \sigma_1 -n)/p + (a \sigma_2 -n)/\tilde{p} \right)/2}{(a/2) \left( \sigma_1/2 + \sigma_2/2 - \sigma \right)}
\end{align*}
We rearrange these inequalities:
\begin{align*}
1 \leq \frac{1}{\theta} < \infty &\,&\\
\frac{\sigma_1 - 1}{\sigma_1} \frac{\theta}{2} \leq \frac{1}{p} \leq \frac{\theta}{2} &\;& \; 
\frac{\sigma_2 - 1}{\sigma_2} \frac{\theta}{2} \leq \frac{1}{\tilde{p}} \leq \frac{\theta}{2} \\
\frac{\sigma_1 \theta}{2} \leq \frac{1}{q} + \frac{\sigma_1}{p} &\;& \; \frac{\sigma_2 \theta}{2} \leq \frac{1}{\tilde{q}} + \frac{\sigma_2}{\tilde{p}} \\
\frac{s-r + \left( (a \sigma_1 -n)/p + (a \sigma_2 -n)/\tilde{p} \right)/2}{(a/2) \left( \sigma_1/2 + \sigma_2/2 - \sigma \right)} &=\theta& \;
\end{align*}
Next, we isolate the quantity $1/\theta$:
\begin{align}
1 \leq \frac{1}{\theta} < \infty &\;& \label{eq:isolatedTheta} \\
\frac{\sigma_1 - 1}{\sigma_1} \frac{p}{2} \leq \frac{1}{\theta} \leq \frac{p}{2} &\;& \frac{\sigma_2 -1}{\sigma_2} \frac{\tilde{p}}{2} \leq \frac{1}{\theta} \leq \frac{\tilde{p}}{2} \nonumber \\
\frac{(\sigma_1/2)}{(1/q) + (\sigma_1/p)} \leq \frac{1}{\theta} &\;& \frac{(\sigma_2/2)}{(1/\tilde{q}) + (\sigma_2/\tilde{p})} \leq \frac{1}{\theta} \nonumber \\
\frac{(a/2) \left( \sigma_1/2 + \sigma_2/2 - \sigma \right)}{s-r + (1/2) \left( (a \sigma_1 -n)/p + (a \sigma_2 -n)/\tilde{p} \right)} &=\frac{1}{\theta}& \; \nonumber
\end{align}
To find the conditions on the derivative parameter, we plug in the value we found for $1/\theta=\mu$ in terms of the derivative parameter into \eqref{eq:isolatedTheta} which finishes the proof.
\end{proof}
\subsection{Scaling symmetry}
\label{subsection:scalingSymmetry}
Next, we apply the scaling symmetry already mentioned above to find rescaled local estimates, which become useful when we recover the global estimates.\\
The following proposition is an extension to {\cite[Proposition~2.1.,~p.~6]{Foschi2005}}.
\begin{lemma}[Rescaled local estimates]
\label{lem:scalingLemma}
Suppose that the family $\mathcal{U}_a$ admits the estimate \eqref{eq:localInhomogEstIII} or \eqref{eq:localInhomogEstIV} for some $q,p,\tilde{q},\tilde{p},s$ with $p,\tilde{p} \in (1,\infty)$\footnote{This is a technical restriction, which does not have any practical relevance for us because in Theorem \ref{thm:localEstimates} we found $2\leq p,\tilde{p} < \infty$.} and any two time intervals of unit length $|I| = |J| = 1$, which are separated so that dist$(I,J)\sim 1$, when $\sup I < \inf J$ and that the $Z_p$-spaces have the compatibility property.\\
Letting $\tilde{I}, \tilde{J}$ denote two time intervals with $|\tilde{I}| = |\tilde{J}| = \lambda$, which are separated so that dist$(\tilde{I},\tilde{J})\sim \lambda$, when $\sup \tilde{I} < \inf \tilde{J}$, in case of \eqref{eq:localInhomogEstIII} we find the estimate
\begin{equation}
\label{eq:rescaledInhomogEstI}
\left\Vert P_N \int_{-\infty}^t e^{i(t-s)D^a} F(s) ds \right\Vert_{L_t^{\tilde{q}}(\tilde{J}, Z^{s}_{\tilde{p}})} \lesssim_{n,p,q,\tilde{p},\tilde{q}} \lambda^{\beta_a(q,\tilde{q},p,\tilde{p},s)} N^{-2s} \Vert F \Vert_{L_t^{q^\prime}(\tilde{I}; Z^{-s}_{p^\prime})} 
\end{equation}
to be true and in case of \eqref{eq:localInhomogEstIV} we find the following estimate to hold
\begin{equation}
\left\Vert P_N \int_{-\infty}^t e^{i(t-s)D^a} F(s) ds \right\Vert_{L_t^{\tilde{q}}(\tilde{J}, Z^{s}_{\tilde{p}})} \lesssim_{n,p,q,\tilde{p},\tilde{q}} \lambda^{\beta_a(q,\tilde{q},p,\tilde{p},s)} \Vert F \Vert_{L_t^{q^\prime}(\tilde{I}; Z^{-s}_{p^\prime})}.
\label{eq:rescaledInhomogEstII} 
\end{equation}
\end{lemma}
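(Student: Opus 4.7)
The plan is to reduce the rescaled estimates to the unit-length hypothesis by a single change of variables that is adapted to the symmetry of $i\partial_t u + D^a u = F$. With $\mu = \lambda^{1/a}$, I would introduce $v(t,x) := u(\lambda t, \mu x)$ and $F_\lambda(t,x) := \lambda F(\lambda t, \mu x) = \mu^a F(\lambda t, \mu x)$. A direct computation shows that $\bigl(i\partial_t + D^a\bigr) v = F_\lambda$, and the Duhamel representation is preserved: the operator $\int_{-\infty}^t e^{i(t-s)D^a} (\cdot)(s)\, ds$ intertwines the dilation $(t,x)\mapsto(\lambda t,\mu x)$ with its rescaled version up to the factor $\mu^a=\lambda$. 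The map $t\mapsto \lambda t$ sends the unit intervals $I,J$ with $\sup I<\inf J$ and $\mathrm{dist}(I,J)\sim 1$ bijectively to intervals $\tilde I,\tilde J$ of length $\lambda$ with $\sup\tilde I<\inf\tilde J$ and $\mathrm{dist}(\tilde I,\tilde J)\sim\lambda$, so the separation hypothesis is preserved under scaling.

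Next I would compute the effect of the dilation on each norm using the compatibility property. The $L^p$-dilation identity gives $\Vert v(t,\cdot)\Vert_{Z_{\tilde p}}=\mu^{-n/\tilde p}\Vert u(\lambda t,\cdot)\Vert_{Z_{\tilde p}}$, hence
\begin{equation*}
\Vert v\Vert_{L_t^{\tilde q}(J;Z_{\tilde p})}=\lambda^{-1/\tilde q}\mu^{-n/\tilde p}\Vert u\Vert_{L_t^{\tilde q}(\tilde J;Z_{\tilde p})},
\end{equation*}
and similarly
\begin{equation*}
\Vert F_\lambda\Vert_{L_t^{q'}(I;Z_{p'})}=\lambda^{1-1/q'}\mu^{-n/p'}\Vert F\Vert_{L_t^{q'}(\tilde I;Z_{p'})}.
\end{equation*}
For the frequency-localized version \eqref{eq:rescaledInhomogEstI}, the Fourier calculation gives $P_N v(t,x)=(P_{N/\mu}u)(\lambda t,\mu x)$, so applying the hypothesis to $v$ at dyadic frequency $N$ corresponds to applying it to $u$ at frequency $M:=N/\mu$, producing a factor $N^{-2s}=\mu^{-2s}M^{-2s}$. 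For the Besov-like version \eqref{eq:rescaledInhomogEstII}, the bijection $N\leftrightarrow M=N/\mu$ on dyadic scales lets one transfer the $\ell^2_N$-summation without loss, the $\mu^{-2s}$ factor now coming from the explicit $D^s$-scaling in $Z_p^s$.

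I would then apply the unit-scale hypothesis to $v$ with forcing $F_\lambda$, substitute the norm identities above, and rearrange. Using $\mu=\lambda^{1/a}$ and $1/p'=1-1/p$, the combined exponent of $\lambda$ on the right-hand side simplifies to
\begin{equation*}
\tfrac{1}{q}+\tfrac{1}{\tilde q}-\tfrac{n}{a}\Bigl(1-\tfrac{1}{p}-\tfrac{1}{\tilde p}\Bigr)-\tfrac{2s}{a}=\beta_a(q,\tilde q,p,\tilde p,s),
\end{equation*}
which is exactly the asserted power. The restriction $p,\tilde p\in(1,\infty)$ enters only to guarantee that the dilation and frequency-localization identities from the compatibility property, together with the duality \eqref{eq:dualityRelation}, are available uniformly, so that the argument applies to both the primal and dual sides. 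The single real obstacle is the bookkeeping: one must check that the $\lambda$-powers coming from time, space, derivative, and frequency scalings recombine cleanly into $\beta_a$ with no spurious contributions from the Besov-type summation, which is what the identity $\mu^a=\lambda$ together with the relation $1/p'=1-1/p$ is designed to ensure.
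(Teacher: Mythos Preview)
Your proof is correct and follows exactly the approach the paper indicates: the paper's own proof reads in its entirety ``The proof consists only of straight-forward changes of variables,'' and you have carried out precisely that change of variables, namely the space--time dilation $(t,x)\mapsto(\lambda t,\lambda^{1/a}x)$ adapted to the symmetry of $i\partial_t+D^a$. Your bookkeeping of the $\lambda$-powers from the time, space, frequency, and derivative scalings is accurate and correctly recombines into $\beta_a(q,\tilde q,p,\tilde p,s)$.
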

\begin{proof}
The proof consists only of straight-forward changes of variables.
\end{proof}
\subsection{Recovering the global estimates}
\label{subsection:globalInhomogeneousEstimates}
We follow the strategy from \cite{Foschi2005}, that is considering a Whitney decomposition of the domain of integration, so that the local estimates come into play and gaining summability by perturbation of the coefficients. The execution is complicated from the additional derivative parameter.\\
To prove the inhomogeneous estimates \eqref{eq:globalEstimateI} and \eqref{eq:globalEstimateII}, we adapt the bilinear formulation of $\left(TT^*\right)_{R}$ by setting
\begin{equation*}
\begin{split}
 B: L_t^{q^\prime} Z^{-s}_{p^\prime} \times L_t^{\tilde{q}^\prime} Z^{-\tilde{s}}_{\tilde{p}^\prime} &\rightarrow \mathbb{C} \\
 B(F,G) &= \int \int_{s<t} \, \langle  U(-s)  F(s), \, U(-t) G(t) \rangle \, ds \, dt
 \end{split}
 \end{equation*}
 and we also have to consider the following variant with frequency localization
 \begin{equation*}
 \begin{split}
 B^N: L_t^{q^\prime} Z_{p^\prime} \times L_t^{\tilde{q}^\prime} Z_{\tilde{p}^\prime} &\rightarrow \mathbb{C} \\
 B^N(F,G) &= \int \int_{s<t} \, \langle P_N U(-s)  F(s), \, U(-t) G(t) \rangle \, ds \, dt
 \end{split} 
 \end{equation*}
 Note that we have to keep track of the derivative parameters when estimating the latter form.\\
 We can exploit our previous results on local estimates by considering Whitney's dyadic decomposition of the domain of integration $\Omega = \left\{ (s,t) \in \mathbb{R}^2 \, | \, s < t \right\}$. Recall that a dyadic cube in Euclidean space is a cube whose sidelength is a dyadic number $\lambda \in 2^{\mathbb{Z}}$ and the coordinates of its vertices are integer multiples of $\lambda$. Precisely, we use the  theorem \cite[Appendix~J,~pp.~463-464]{Grafakos2008} on decomposition of open sets in Euclidean space into essentially disjoint dyadic cubes.
 By $\mathcal{Q}$ we denote the Whitney decomposition of $\Omega$.\\
 For each dyadic number $\lambda$, by $\mathcal{Q}_{\lambda}$ we denote the collection of squares in $\mathcal{Q}$ with sidelength $\lambda$. Each square $Q=I \times J \in \mathcal{Q}_{\lambda}$ satisfies the condition
 \begin{equation*}
 \lambda = |I| = |J| \sim dist(Q, \partial \Omega) \sim dist(I,J).
 \end{equation*}
 Transferring the Whitney decomposition onto the bilinear form we arrive at
 \begin{equation}
 \label{eq:bilinearSumWhitneyII}
 B = \sum_{\lambda} \sum_{Q \in \mathcal{Q}_{\lambda}} B_Q,
 \end{equation}
 where we have restricted the domain of integration to $Q=I \times J$ on $B_Q$, that is
 \begin{equation*}
 \label{eq:bilinearSumWhitneyI}
 B_Q(F,G) = B(\chi_I F, \chi_J G) = \int \int_{s \in I,\; t \in J} \langle U(-s) F(s), \, U(-t) G(t) \rangle \, ds \, dt.
 \end{equation*}
 Note that when we consider estimates for $B_Q, \; Q=I \times J$, $F$ and $G$ are effectively supported on $I$ and $J$, respectively: Thus, we find that the estimate \eqref{eq:rescaledInhomogEstII} is equivalent to 
 \begin{equation}
 \label{eq:bilinearLocalInhomogEst}
 |B_Q(F,G)| \lesssim_{n,p,q,\tilde{p},\tilde{q}} \lambda^{\beta_a(q,\tilde{q},p,\tilde{p},s)} \Vert F \Vert_{L_t^{\tilde{q}^\prime}(I;Z^{-s}_{\tilde{p}^\prime})} \Vert G \Vert_{L_t^{q^\prime}(J;Z^{-s}_{p^\prime})}, \; \; Q \in \mathcal{Q}_\lambda.
 \end{equation}
 Note that the above arguments also apply to $B^N$ with the obvious modifications.\\
 To recover the global estimates from the local ones, we have to perform the summations. First, we note the following variant to H\"older's inequality for sequence spaces by combining ordinary H\"older's inequality with the embedding $\ell^p \hookrightarrow \ell^q$ for $p \leq q$:
 \begin{lemma}[{\cite[Lemma~3.2.,~p.~8]{Foschi2005}}]
 \label{lem:HoelderSequences}
 Suppose that $1/r + 1/\tilde{r} \geq 1$, then we have
 \begin{equation*}
 \sum_{\begin{array}{l} Q \in \mathcal{Q}_{\lambda} \\ Q=I \times J \end{array}} \Vert f \Vert_{L^{\tilde{r}}(I)} \Vert g \Vert_{L^r(J)} \leq \Vert f \Vert_{L^{\tilde{r}}(\mathbb{R})} \Vert g \Vert_{L^r(\mathbb{R})}
 \end{equation*}
 for $f \in L^{\tilde{r}}(\mathbb{R})$, $g \in L^r(\mathbb{R})$ and any dyadic number $\lambda$.
 \end{lemma}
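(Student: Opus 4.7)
The plan is to exploit the combinatorial structure of the Whitney decomposition at a fixed scale $\lambda$, reduce the sum to a bilinear pairing on sequence spaces, and then invoke Hölder's inequality together with the embedding $\ell^r \hookrightarrow \ell^{\tilde{r}'}$. More precisely, fix $\lambda$ and write each $Q \in \mathcal{Q}_\lambda$ as $Q = I \times J$ with $I, J$ dyadic intervals of length $\lambda$. By the defining properties of a Whitney decomposition, the intervals $I$ arising as left factors (and likewise the right factors $J$) are pairwise essentially disjoint, and each fixed $I$ is paired with only boundedly many $J$'s (in fact just one, after discarding a fixed finite overlap), and symmetrically.

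With this in hand, set $a_I = \Vert f \Vert_{L^{\tilde{r}}(I)}$ and $b_J = \Vert g \Vert_{L^r(J)}$. I would first apply the standard sequence-Hölder inequality with exponents $\tilde{r}$ and $\tilde{r}'$:
\begin{equation*}
\sum_{Q=I\times J \in \mathcal{Q}_\lambda} a_I\, b_J \;\leq\; \Bigl(\sum_{Q} a_I^{\tilde{r}}\Bigr)^{1/\tilde{r}} \Bigl(\sum_{Q} b_J^{\tilde{r}'}\Bigr)^{1/\tilde{r}'}.
\end{equation*}
The essential disjointness of the $I$'s gives $\sum_Q a_I^{\tilde{r}} \leq \sum_I \Vert f\Vert_{L^{\tilde{r}}(I)}^{\tilde{r}} \leq \Vert f \Vert_{L^{\tilde{r}}(\mathbb{R})}^{\tilde{r}}$ by monotone convergence, so the first factor is controlled by $\Vert f \Vert_{L^{\tilde{r}}(\mathbb{R})}$.

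For the second factor, the hypothesis $1/r + 1/\tilde{r} \geq 1$ rearranges to $r \leq \tilde{r}'$, so the inclusion $\ell^r \hookrightarrow \ell^{\tilde{r}'}$ holds with norm one. Hence
\begin{equation*}
\Bigl(\sum_{J} b_J^{\tilde{r}'}\Bigr)^{1/\tilde{r}'} \leq \Bigl(\sum_{J} b_J^{r}\Bigr)^{1/r} = \Bigl(\sum_J \Vert g \Vert_{L^r(J)}^{r}\Bigr)^{1/r} \leq \Vert g \Vert_{L^r(\mathbb{R})},
\end{equation*}
again using the disjointness of the $J$'s. Combining the two bounds delivers the claim.

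The only mildly delicate point I anticipate is the bookkeeping in step one: one must confirm that summing over $Q \in \mathcal{Q}_\lambda$ does not introduce an uncontrolled multiplicity, i.e. that the map $Q \mapsto I$ (and $Q \mapsto J$) is at most finite-to-one with a dimension-free constant. This is a structural fact about the Whitney decomposition of the half-plane $\{s<t\}$ and amounts to the standard observation that a cube of sidelength $\lambda$ at distance $\sim \lambda$ from $\partial \Omega$ has only $O(1)$ neighbours at the same scale. Once this is ensured, all remaining steps are routine applications of Hölder and the $\ell^p$-nesting.
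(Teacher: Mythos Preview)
Your argument is correct and matches the paper's approach exactly: the paper does not spell out a proof but states immediately before the lemma that it follows ``by combining ordinary H\"older's inequality with the embedding $\ell^p \hookrightarrow \ell^q$ for $p \leq q$,'' which is precisely what you do. One small wording issue: the left factors $I$ are not pairwise disjoint as stated---rather, the \emph{distinct} dyadic intervals of length $\lambda$ are disjoint, and each such $I$ occurs as a left factor of at most $O(1)$ squares $Q\in\mathcal{Q}_\lambda$; you correctly identify this bounded-multiplicity point in your final paragraph, and it only costs an absolute constant (which is harmless in the way the lemma is used, even though the displayed inequality is written with $\leq$).
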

 An application of Lemma \ref{lem:HoelderSequences} to \eqref{eq:bilinearLocalInhomogEst} under the assumption $1/q + 1/\tilde{q} \leq 1$ yields
 \begin{equation}
 \label{eq:bilinearEstimateSummation}
 \sum_{Q \in \mathcal{Q}_\lambda} |B_Q(F,G)| \lesssim_{n,p,q,\tilde{p},\tilde{q}} \lambda^{\beta_a(q,\tilde{q},p,\tilde{p},s)} \Vert F \Vert_{L_t^{q^\prime}(\mathbb{R};Z^{-s}_{p^\prime})} \Vert G \Vert_{L_t^{\tilde{q}^\prime}(\mathbb{R};Z^{-s}_{\tilde{p}^\prime})}.
 \end{equation}
 Since the operator $\left( TT^*\right)_R$ has a convolution structure, we do not lose any globally admissible pairs making this assumption.\\
 We will also need the following variant of Young's inequality:
 \begin{lemma}[{\cite[Lemma~4.3.,~p.~11]{Foschi2005}}]
 \label{lem:YoungSequences}
 Let $\left( A_n \right), \left( B_n \right), \left( C_n \right)$ be sequences of non-negative numbers. If
 \begin{equation*}
 \frac{1}{p} + \frac{1}{q} + \frac{1}{r} \geq 2,
 \end{equation*}
 then $\sum_{n,k} A_n B_k C_{n-k} \leq \Vert A \Vert_{\ell^p} \Vert B \Vert_{\ell^q} \Vert C \Vert_{\ell^r}$.
 \end{lemma}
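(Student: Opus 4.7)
The plan is to reduce the inequality to the endpoint equality case $1/p + 1/q + 1/r = 2$ and then invoke the classical Young convolution inequality on $\mathbb{Z}$. The key observation is the reverse embedding for sequence spaces, namely $\Vert x \Vert_{\ell^{p_1}} \leq \Vert x \Vert_{\ell^{p_0}}$ whenever $p_0 \leq p_1$ (equivalently $1/p_1 \leq 1/p_0$). Hence, given $(p,q,r) \in [1,\infty]^3$ with $1/p + 1/q + 1/r > 2$, I would pick $p_1 \geq p$, $q_1 \geq q$, $r_1 \geq r$ satisfying $1/p_1 + 1/q_1 + 1/r_1 = 2$, and deduce the inequality at $(p,q,r)$ from the same inequality at $(p_1, q_1, r_1)$ via three applications of the embedding applied to $A$, $B$, and $C$.

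For the endpoint case, I would set $D_n = (B \ast C)(n) = \sum_k B_k C_{n-k}$ and rewrite the trilinear sum as $\sum_n A_n D_n$. H\"older's inequality then gives
\[
\sum_n A_n D_n \leq \Vert A \Vert_{\ell^p} \Vert D \Vert_{\ell^{p^\prime}},
\]
and the condition $1/p + 1/q + 1/r = 2$ rewrites as $1/q + 1/r = 1 + 1/p^\prime$, which is precisely the hypothesis of Young's convolution inequality on $\mathbb{Z}$. Applying it to $D = B \ast C$ yields $\Vert D \Vert_{\ell^{p^\prime}} \leq \Vert B \Vert_{\ell^q} \Vert C \Vert_{\ell^r}$, and the lemma follows by composition.

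No serious obstacle arises; the argument is routine and combines only the reverse embedding of $\ell^p$-spaces with H\"older and Young. The only point requiring mild care is the admissibility of the exponents under the constraint $2 \leq 1/p + 1/q + 1/r \leq 3$: each of $1/p, 1/q, 1/r$ lies in $[0,1]$, so the reduction step always produces valid $p_1, q_1, r_1 \in [1,\infty]$, and in the reduced case the conjugate $p^\prime$ and the pair $(q,r)$ fall in the admissible range for the standard Young inequality on $\mathbb{Z}$.
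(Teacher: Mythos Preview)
Your argument is correct. Note, however, that the paper does not supply its own proof of this lemma: it is simply quoted from \cite[Lemma~4.3]{Foschi2005}, so there is nothing in the present paper to compare against. Your route---reduce to the equality case $1/p+1/q+1/r=2$ via the inclusion $\ell^{p}\hookrightarrow\ell^{p_1}$ for $p\le p_1$, then apply H\"older followed by Young's convolution inequality on $\mathbb{Z}$---is the standard one and is exactly in the spirit of the remark the paper makes just before Lemma~\ref{lem:HoelderSequences} (``combining ordinary H\"older's inequality with the embedding $\ell^p\hookrightarrow\ell^q$ for $p\le q$'').
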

 Apparently, necessary for performing the summations in \eqref{eq:bilinearSumWhitneyII} is the scaling condition $\beta_a = 0$. But still, plain summation does not work; however, perturbation of the exponents enhances summability because of the exponential decay one has in a neighbourhood as was well demonstrated in \cite{TaoKeel1998} and \cite{Foschi2005}.\\[0.5cm]
 First, we consider the non-sharp case $$1/q + 1/\tilde{q} < 1,$$ which allows us to perturb the time integrability exponents. We prepare this interpolation step by introducing atomic decompositions of $L^p$-functions. We resort to a vector-valued atomic decomposition of the functions $F \in L_t^{q^\prime} Z^{-s}_{p^\prime}$, $G \in L_t^{\tilde{q}^\prime} Z^{-\tilde{s}}_{\tilde{p}^\prime}$: The following part follows the steps from \cite[pp.~8-11]{Foschi2005} with the addition that in the concrete part of the proof, we are dealing with the unspecified Banach spaces $Z^{s}_p$, whereat in \cite{Foschi2005} the $L^p_X$-spaces were considered.
 \begin{definition}[$p$-atoms]
 Let $\mathcal{X}$ denote a measure space, $\mathcal{D}$ denote a Banach space and $1 \leq p \leq \infty$. A $p$-atom in $L^p(\mathcal{X};\mathcal{D})$ of size $\lambda$ is a measurable function $\varphi: \mathcal{X} \rightarrow \mathcal{D}$ such that:
 \begin{enumerate}
 \item[(i)] $\xi \mapsto \varphi(\xi)$ is supported on a set of measure less than $\lambda$;
 \item[(ii)] $\Vert \varphi \Vert_{L^\infty(\mathcal{X};\mathcal{D})} \lesssim_p \lambda^{-1/p}$.
 \end{enumerate}
 \end{definition}
 We note that $\Vert \varphi \Vert_{L^q(\mathcal{X};\mathcal{D})} \lesssim_p \lambda^{\frac{1}{q}-\frac{1}{p}}$, which will become mostly important, and we have the following result on atomic decompositions:
 \begin{lemma}[Atomic decomposition of $L^p(\mathcal{X};\mathcal{D})$-spaces, {\cite[Lemma~3.4,~p.~9]{Foschi2005}}]
 Any $\mathcal{D}$-valued function $F\in L^p(\mathcal{X};\mathcal{D})$ can be decomposed as
 \begin{equation*}
 F = \sum_{\lambda \in 2^{\mathbb{Z}}} a_\lambda \varphi_\lambda,
 \end{equation*}
 where:
 \begin{enumerate}
 \item[(a)] each $\varphi_\lambda$ is a $p$-atom in $L^p(\mathcal{X};\mathcal{D})$ of size $\lambda$;
 \item[(b)] the atoms $\varphi_\lambda$ have disjoint supports;
 \item[(c)] $a_\lambda$ are non-negative constants such that $\Vert f \Vert_{L^p(\mathcal{X};\mathcal{D})} \sim_p \Vert a_\lambda \Vert_{\ell^p(2^{\mathbb{Z}})}$.
 \end{enumerate}
 \end{lemma}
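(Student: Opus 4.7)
The plan is to reduce the vector-valued decomposition to a scalar one by working with the pointwise norm $h(\xi) = \|F(\xi)\|_{\mathcal{D}}$, carving out controlled level sets of $h$, and lifting the resulting scalar pieces back to $\mathcal{D}$-valued atoms by multiplication with the unit-direction $F/h$.

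Concretely, I would index directly over dyadic $\lambda = 2^{j} \in 2^{\mathbb{Z}}$. Using the decreasing rearrangement $h^{*}\colon(0,\infty)\to[0,\infty)$ of $h$ together with an abstract measurable-selection argument applied to the super-level sets $\{\xi : h(\xi)>t\}$, one can extract pairwise disjoint subsets $E_{\lambda}\subseteq \mathcal{X}$ of measure $|E_{\lambda}| = \lambda/2$ on which
\begin{equation*}
h^{*}(\lambda) \;\leq\; h(\xi) \;\leq\; h^{*}(\lambda/2), \qquad \xi \in E_{\lambda},
\end{equation*}
and whose union covers the essential support of $F$ up to a null set. Setting
\begin{equation*}
a_{\lambda} \;=\; h^{*}(\lambda/2)\,\lambda^{1/p}, \qquad \varphi_{\lambda} \;=\; a_{\lambda}^{-1}\, F\, \chi_{E_{\lambda}},
\end{equation*}
yields $\mathrm{supp}\,\varphi_{\lambda}\subseteq E_{\lambda}$ with $|E_{\lambda}|=\lambda/2\leq\lambda$ and $\|\varphi_{\lambda}\|_{L^{\infty}(\mathcal{X};\mathcal{D})}\leq\lambda^{-1/p}$, so each $\varphi_{\lambda}$ is a $p$-atom of size $\lambda$; conditions (a) and (b) of the statement are then built into the construction, and $F = \sum_{\lambda} a_{\lambda} \varphi_{\lambda}$ holds pointwise a.e.

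The coefficient-norm equivalence of (c) will come from the layer-cake/rearrangement identity
\begin{equation*}
\|F\|_{L^{p}(\mathcal{X};\mathcal{D})}^{p} \;=\; \int_{\mathcal{X}} h(\xi)^{p}\,d\mu(\xi) \;=\; \int_{0}^{\infty} h^{*}(s)^{p}\,ds \;\sim_{p}\; \sum_{\lambda \in 2^{\mathbb{Z}}} \lambda\, h^{*}(\lambda)^{p} \;\sim_{p}\; \sum_{\lambda} a_{\lambda}^{p},
\end{equation*}
where the passage to the dyadic sum uses that $h^{*}$ is monotone and hence comparable to a step function across dyadic scales. The main obstacle is the measurable carving of the $E_{\lambda}$ on an abstract measure space: one cannot simply take ``half'' of a super-level set $\{h>h^{*}(\lambda)\}$, so I would invoke a standard selection theorem (using that $(\mathcal{X},\mu)$ is implicitly $\sigma$-finite in the setting) to produce subsets of prescribed measure inside a prescribed range of $h$-values. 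Once this carving is in hand, the rest reduces to bookkeeping of the dyadic sum.
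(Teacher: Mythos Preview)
The paper does not give its own proof of this lemma; it is quoted from Foschi and used as a black box, so there is no in-paper argument to compare against.

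Your argument is correct. Passing to the scalar $h=\|F\|_{\mathcal D}$, carving $\{h>0\}$ into disjoint pieces $E_\lambda$ of measure $\lambda/2$ on which $h$ sits between consecutive dyadic values of $h^*$, and normalising gives atoms of exactly the stated size with disjoint supports; the $\ell^p$--equivalence then follows from the dyadic discretisation of $\int_0^\infty h^*(s)^p\,ds$ as you indicate. The one genuine technical input you flag --- extracting measurable subsets of prescribed measure inside the super-level sets --- is available once $\mu$ is nonatomic, and in the paper's applications $\mathcal X$ is $\mathbb R$ or $(0,\infty)$ with an absolutely continuous weight, so this is automatic.

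For context, the usual proof (and Foschi's) slices by dyadic \emph{values} of $h$ rather than by dyadic measure: one takes $E_k=\{2^{k-1}<h\le 2^{k}\}$, normalises $F\chi_{E_k}$, and declares the resulting atom to have size $|E_k|$. That route needs no selection argument because the $E_k$ are disjoint by construction, but the sizes $|E_k|$ are not a priori dyadic and one must round and relabel to match the indexing by $\lambda\in 2^{\mathbb Z}$. Your rearrangement approach trades that relabelling for the carving step and produces the dyadic-size indexing directly; either way works for the downstream use in the paper.
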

 Atomic decomposition of $F \in L_t^{q^\prime} Z^{-s}_{p^\prime}$ and $G \in L_t^{\tilde{q}^\prime} Z^{-\tilde{s}}_{\tilde{p}^\prime}$ with respect to the $L_t^q$-spaces yields
 \begin{equation*}
 F(t) = \sum_\mu a_\mu \varphi_\mu(t), \; \; G(t) = \sum_{\nu} b_\nu \psi_\nu(t),
 \end{equation*}
 where $\varphi_\mu$ is a $q^\prime$-atom with values in $Z^{-s}_{p^\prime}$ of size $\mu$ and 
 $\psi_\nu$ is a $\tilde{q}^\prime$-atom with values in $Z^{-\tilde{s}}_{\tilde{p}^\prime}$ of size $\nu$ and
 \begin{equation*}
 \Vert F \Vert_{L_t^{q^\prime} Z^{-s}_{p^\prime}} \sim_{q^\prime} \Vert a_\mu \Vert_{\ell^{q^\prime}}, \; \; \Vert G \Vert_{L_t^{\tilde{q}^\prime} Z^{-\tilde{s}}_{\tilde{p}^\prime}} \sim_{\tilde{q}^\prime} \Vert b_\nu \Vert_{\ell^{\tilde{q}^\prime}}.
 \end{equation*}
 Plugging the atomic decomposition into \eqref{eq:bilinearSumWhitneyII} we arrive at
 \begin{equation}
 \label{eq:bilinearSumWhitneyIII}
 B(F,G) = \sum_{\lambda, \mu, \nu} a_\mu b_\nu \sum_{Q \in \mathcal{Q}_\lambda} B_Q(\varphi_\mu,\psi_\nu).
 \end{equation}
 For convenience we introduce the notation $$[\lambda]  = \max\left\{ \lambda, \frac{1}{\lambda} \right\},$$ which will play the role of an absolute value for dyadic numbers in the following.\\
 The freedom to perturb the exponents gives rise to the following lemma, this is an extension to \cite[Lemma~4.2.,~p.~10]{Foschi2005}: 
 \begin{lemma}
 \label{lem:timewisePerturbation}
 Suppose that $1/q_0 + 1/\tilde{q}_0 < 1$, and that we have the local estimates \eqref{eq:localInhomogEstIII} and \eqref{eq:localInhomogEstIV} with exponents $(q,p,\tilde{q},\tilde{p},s)$ for all $(1/q,1/\tilde{q})$ in a full neighbourhood of $(1/q_0,1/\tilde{q}_0)$. Then, there exists $\varepsilon=\varepsilon(q_0,\tilde{q}_0) > 0$ such that, for all dyadic numbers $\lambda, \mu, \nu$, we have
 \begin{equation}
 \label{eq:timewisePerturbationI}
 \sum_{Q \in \mathcal{Q}_\lambda} | B_Q(\varphi_\mu,\psi_\nu)| \lesssim_{n,p,q_0,\tilde{p},\tilde{q}_0} \lambda^{\beta_a(q_0,\tilde{q}_0,p,\tilde{p},s)} \left[ \frac{\mu}{\lambda} \right]^{-\varepsilon} \left[ \frac{\nu}{\lambda} \right]^{-\varepsilon},
 \end{equation}
 whenever $\varphi_\mu$ is a $q_0^\prime$-atom of size $\mu$ in $L_t^{q_0^\prime} Z^{-s}_{p^\prime}$, and $\psi_\nu$ is a $\tilde{q}_0^\prime$-atom of size $\nu$ in $L_t^{\tilde{q}_0^\prime} Z^{-s}_{\tilde{p}^\prime}$ and
  \begin{equation}
  \label{eq:timewisePerturbationII}
 \sum_{Q \in \mathcal{Q}_\lambda} | B_Q^N(\varphi_\mu,\psi_\nu)| \lesssim_{n,p,q_0,\tilde{p},\tilde{q}_0} \lambda^{\beta_a(q_0,\tilde{q}_0,p,\tilde{p},s)} N^{-2s} \left[ \frac{\mu}{\lambda} \right]^{-\varepsilon} \left[ \frac{\nu}{\lambda} \right]^{-\varepsilon},
 \end{equation}
 whenever $\varphi_\mu$ is a $q_0^\prime$-atom of size $\mu$ in $L_t^{q_0^\prime} Z_{p^\prime}$, and $\psi_\nu$ is a $\tilde{q}_0^\prime$-atom of size $\nu$ in $L_t^{\tilde{q}_0^\prime} Z_{\tilde{p}^\prime}$.
 \end{lemma}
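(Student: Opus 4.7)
The plan is to combine the rescaled bilinear local estimate with the atomic size bound, then recover exponential decay in the ratios $\mu/\lambda,\nu/\lambda$ by adaptively perturbing the time integrability exponents inside the neighborhood of $(1/q_0,1/\tilde{q}_0)$ where the hypothesis guarantees the local estimates. The space parameters $(p,\tilde{p},s)$ are held fixed and only $(q,\tilde{q})$ varies.

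First, fix $(q,\tilde{q})$ in a small neighborhood of $(q_0,\tilde{q}_0)$ where \eqref{eq:localInhomogEstIII} and \eqref{eq:localInhomogEstIV} are in force; by continuity one may shrink the neighborhood so that $1/q+1/\tilde{q}\leq 1$ throughout. For each $Q=I\times J\in\mathcal{Q}_\lambda$, the bilinear formulation of the rescaled estimate \eqref{eq:rescaledInhomogEstII} from Lemma~\ref{lem:scalingLemma} yields
\[|B_Q(\varphi_\mu,\psi_\nu)|\lesssim \lambda^{\beta_a(q,\tilde{q},p,\tilde{p},s)}\|\varphi_\mu\|_{L_t^{q'}(I;Z^{-s}_{p'})}\|\psi_\nu\|_{L_t^{\tilde{q}'}(J;Z^{-s}_{\tilde{p}'})}.\]
Summing over $\mathcal{Q}_\lambda$ via Lemma~\ref{lem:HoelderSequences} (applicable since $1/q'+1/\tilde{q}'\geq 1$) converts the local norms into full-line norms.

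Next, exploit the atomic size bound: a $q_0'$-atom $\varphi_\mu$ of size $\mu$ has support of measure $\leq\mu$ and $L^\infty_t Z^{-s}_{p'}$-norm $\lesssim\mu^{-1/q_0'}$, hence $\|\varphi_\mu\|_{L_t^{q'}Z^{-s}_{p'}}\lesssim\mu^{1/q_0-1/q}$, and symmetrically for $\psi_\nu$. Combined with the identity $\beta_a(q,\tilde{q},p,\tilde{p},s)=\beta_a(q_0,\tilde{q}_0,p,\tilde{p},s)+(1/q-1/q_0)+(1/\tilde{q}-1/\tilde{q}_0)$ this gives
\[\sum_{Q\in\mathcal{Q}_\lambda}|B_Q(\varphi_\mu,\psi_\nu)|\lesssim\lambda^{\beta_a(q_0,\tilde{q}_0,p,\tilde{p},s)}(\lambda/\mu)^{1/q-1/q_0}(\lambda/\nu)^{1/\tilde{q}-1/\tilde{q}_0}.\]
Now choose $(q,\tilde{q})$ adaptively: take $q<q_0$ when $\mu\geq\lambda$ and $q>q_0$ when $\mu<\lambda$, at a fixed perturbation size $\varepsilon$ permitted by the neighborhood, and symmetrically for $\tilde{q}$ and $\nu$. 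In either case $(\lambda/\mu)^{1/q-1/q_0}=[\mu/\lambda]^{-\varepsilon}$, delivering \eqref{eq:timewisePerturbationI}.

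The frequency-localized variant \eqref{eq:timewisePerturbationII} follows by the identical argument, starting from the rescaled form of \eqref{eq:localInhomogEstIII} which carries the extra factor $N^{-2s}$, with atoms taken in $L_t^{q_0'}Z_{p'}$ rather than $L_t^{q_0'}Z^{-s}_{p'}$. The main subtlety is that the adaptive sign choice requires the neighborhood to be genuinely two-dimensional around $(1/q_0,1/\tilde{q}_0)$, so that both $q<q_0$ and $q>q_0$ (and likewise for $\tilde{q}$) are admissible; the openness in the hypothesis secures this and provides a uniform $\varepsilon=\varepsilon(q_0,\tilde{q}_0)>0$ independent of $\lambda,\mu,\nu$.
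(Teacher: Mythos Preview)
Your proof is correct and follows exactly the approach the paper intends: the paper's own proof merely points to \cite[Lemma~4.2]{Foschi2005} and notes that only the $q,\tilde{q}$-part of the scaling function $\beta_a$ matters (which is identical to Foschi's), together with tracking the derivative parameter. You have written out precisely those details---the rescaled bilinear estimate, the summation via Lemma~\ref{lem:HoelderSequences}, the atom bound $\Vert\varphi_\mu\Vert_{L_t^{q'}}\lesssim\mu^{1/q_0-1/q}$, the identity $\beta_a(q,\tilde{q},\ldots)-\beta_a(q_0,\tilde{q}_0,\ldots)=(1/q-1/q_0)+(1/\tilde{q}-1/\tilde{q}_0)$, and the adaptive sign choice---so your argument is the Foschi proof specialized to this setting.
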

 \begin{proof}
 We can transfer the proof from \cite[Lemma~4.2.,~p.~10]{Foschi2005} because the proof only depends on the concrete form of the $q,\tilde{q}$-part of the scaling function, which coincide, and the properties of atomic decompositions, but we have to keep track of the derivative parameters.
 \end{proof}

We are ready to prove the first part of Theorem \ref{thm:globalInhomogeneousEstimates}:
 \begin{proof}[Proof of the non-sharp cases from Theorem \ref{thm:globalInhomogeneousEstimates}]
 We can imitate the proof from \cite[pp.~10-11]{Foschi2005} when we have to work at fixed derivative parameter in addition: Observe that we are in the position to employ Lemma \ref{lem:timewisePerturbation} because we made the inequalities, which involve the parameters $q$ and $\tilde{q}$, strict, and we required $1<q, \tilde{q} < \infty$ and plugging the estimates for atomic decomposition with respect to the $L_t^q$-spaces into \eqref{eq:bilinearSumWhitneyIII} we find
  \begin{equation*}
  | B(F,G) | \leq C_{n,p,q,\tilde{p},\tilde{q}} \sum_{\mu, \nu} a_\mu b_\nu \sum_\lambda \lambda^{\beta_a(q,\tilde{q},p,\tilde{p},s)} \left[ \frac{\mu}{\lambda} \right]^{-\varepsilon} \left[ \frac{\nu}{\lambda} \right]^{-\varepsilon}.
  \end{equation*}
  With the scaling condition $\beta_a = 0$, we can perform the sum over $\lambda$ and find
  \begin{equation*}
  \sum_\lambda \left[ \frac{\mu}{\lambda} \right]^{-\varepsilon} \left[ \frac{\nu}{\lambda} \right]^{-\varepsilon} \lesssim_\varepsilon \left( 1 + \log\left[ \frac{\mu}{\nu} \right] \right) \left[ \frac{\mu}{\nu} \right]^{-\varepsilon} = c_{\mu/\nu}.
  \end{equation*}
  Recall that $\varepsilon = \varepsilon(q,\tilde{q})$.
  Since the sequence $(c_\alpha)$ is absolutely summable, we can apply Lemma  \ref{lem:YoungSequences} on the estimate
  \begin{equation*}
  |B(F,G)| \leq C_{n,p,q,\tilde{p},\tilde{q}} \sum_{\mu,\nu} a_\mu b_\nu c_{\mu/\nu},
  \end{equation*}
  which is possible because $(a_\mu) \in \ell^{q^\prime}, \, (b_\nu) \in \ell^{\tilde{q}^\prime}, \, (c_\alpha) \in \ell^1$ and therefore $ 1/q^\prime + 1/\tilde{q}^\prime + 1 = 3- (1/q +1/\tilde{q}) > 2$ by hypothesis. This proves the estimate \eqref{eq:globalEstimateI}.\\
  For the second claim we consider the bilinear form $B^N$ and following along the above lines with the obvious modifications we arrive at the estimate
  \begin{equation*}
  \label{eq:frequLocEstNonSharp}
  \left\Vert P_N \int_{-\infty}^t U_a(t-s) F(s) ds \right\Vert_{L_t^{\tilde{q}} Z_{\tilde{p}}} \lesssim_{n,p,q,\tilde{p},\tilde{q}} N^{-2s} \Vert \tilde{P}_N F \Vert_{L_t^{q^\prime} Z_{p^\prime}}
  \end{equation*}
  and the estimate \eqref{eq:globalEstimateII} follows by squaring and summing over $N$.
 \end{proof}
For the sharp case $$ 1/q + 1/\tilde{q} = 1,$$ was considered a perturbation of the spatial exponents in \cite{Foschi2005}. Recall that we work under the assumption, that the $Z_p$-spaces are $L^p$-spaces, possibly vector-valued.\\
 Let $F(t) \in Z_{p^\prime}$ and let $G(t) \in Z_{\tilde{p}^\prime}$, which yields the atomic decompositions
\begin{equation*}
F(t) = \sum_{\mu} a_\mu(t) \varphi_\mu(t), \; \; \; G(t) = \sum_\nu b_\nu(t) \psi_\nu(t),
\end{equation*}
where $\varphi_\mu(t)$ is a $Z_{p^\prime}$-atom in $Z_{p^\prime}$ of size $\mu$ and 
$\psi_\nu(t)$ is a $Z_{\tilde{p}^\prime}$-atom in $Z_{\tilde{p}^\prime}$ of size $\nu$ and
\begin{equation*}
\Vert F(t) \Vert_{Z_{p^\prime}} \sim_{p^\prime} \Vert a_\mu(t) \Vert_{\ell^{p^\prime}}, \; \; \;
\Vert G(t) \Vert_{Z_{\tilde{p}^\prime}} \sim_{\tilde{p}^\prime} \Vert b_\nu(t) \Vert_{\ell^{\tilde{p}^\prime}}.
\end{equation*}
Plugging these decompositions into the bilinear form $B^N$ we arrive at
\begin{equation*}
B^N(F,G) = \sum_{\lambda, \mu, \nu} \sum_{Q \in \mathcal{Q}_\lambda} B^N_Q(a_\mu \varphi_\mu,b_\nu \psi_\nu).
\end{equation*}
We have the following lemma on enhanced summability due to perturbation of the spatial exponents at fixed derivative parameter $s$, which is found after \cite[Lemma~5.1.,~p.~12]{Foschi2005}:
 \begin{lemma}
 \label{lem:spatialPerturbation}
 Suppose that the local estimates \eqref{eq:localInhomogEstIII} hold with exponents $(q,p,\tilde{q},\tilde{p},s)$ for all $(1/p,1/\tilde{p})$ in a full neighbourhood of $(1/p_0,1/\tilde{p}_0)$. Then, there exists $\varepsilon = \varepsilon(p_0,\tilde{p}_0) > 0$, such that we have for all dyadic numbers $\lambda, \mu, \nu$ and the dyadic square $Q= I \times J \in \mathcal{Q}_\lambda$
 \begin{equation*}
 \begin{split}
 |B^N_Q(a \varphi_\mu, b \psi_\nu)| \lesssim_{n,p_0,q,\tilde{p}_0,\tilde{q}}& \lambda^{\beta_a(q,\tilde{q},p_0,\tilde{p}_0,s)} \Vert a \Vert_{L^{q^\prime}(I)} \Vert b \Vert_{L^{\tilde{q}^\prime}(J)} N^{-2s} \\
 & \left[ \frac{\mu}{\lambda^{n/a}} \right]^{- \varepsilon} \left[ \frac{\nu}{\lambda^{n/a}} \right]^{- \varepsilon}, 
 \end{split}
 \end{equation*}
 whenever $a \in L^{q^\prime}(I;\mathbb{R}), \; b \in L^{\tilde{q}^\prime}(J;\mathbb{R})$, and for each $t$, the function $\varphi_\mu(t)$ is a $p_0^\prime$-atom in $Z_{p_0^\prime}$ and the function $\psi_\nu(t)$ is a $\tilde{p}_0^\prime$-atom in $Z_{{\tilde{p}_0}^\prime}$ of size $\nu$.
 \end{lemma}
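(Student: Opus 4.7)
The plan is to mimic the temporal-perturbation argument of Lemma \ref{lem:timewisePerturbation}, but perturbing the spatial exponents $(1/p,1/\tilde{p})$ around $(1/p_0,1/\tilde{p}_0)$ instead of the temporal ones. The mismatch between the perturbed exponents and the atomic sizes $\mu,\nu$ will be converted into the decay factors $[\mu/\lambda^{n/a}]^{-\varepsilon}$ and $[\nu/\lambda^{n/a}]^{-\varepsilon}$; the factor $\lambda^{n/a}$ appears because $n/a$ is precisely the coefficient of $1/p$ and $1/\tilde{p}$ in the scaling exponent $\beta_a$.

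First I would dualize the rescaled frequency-localized local estimate from Lemma \ref{lem:scalingLemma} with exponents $(q,r,\tilde{q},\tilde{r},s)$, where $(1/r,1/\tilde{r})$ lies in the assumed neighborhood of $(1/p_0,1/\tilde{p}_0)$, to obtain
\[
|B^N_Q(F,G)| \lesssim \lambda^{\beta_a(q,\tilde{q},r,\tilde{r},s)} N^{-2s} \Vert F \Vert_{L_t^{q^\prime}(I;Z_{r^\prime})} \Vert G \Vert_{L_t^{\tilde{q}^\prime}(J;Z_{\tilde{r}^\prime})}.
\]
Substituting $F=a\varphi_\mu$, $G=b\psi_\nu$ and pulling the scalars $a,b$ out pointwise in $t$, the norm of the atom is bounded by H\"older (support of measure $\leq \mu$ combined with the $L^\infty$-bound $\mu^{-1/p_0^\prime}$), giving
\[
\Vert \varphi_\mu(t)\Vert_{Z_{r^\prime}} \lesssim \mu^{1/r^\prime - 1/p_0^\prime} = \mu^{1/p_0 - 1/r},
\]
and analogously $\Vert \psi_\nu(t)\Vert_{Z_{\tilde{r}^\prime}} \lesssim \nu^{1/\tilde{p}_0 - 1/\tilde{r}}$.

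Next I would use the affine dependence
\[
\beta_a(q,\tilde{q},r,\tilde{r},s) = \beta_a(q,\tilde{q},p_0,\tilde{p}_0,s) + \frac{n}{a}\Bigl[(1/r - 1/p_0) + (1/\tilde{r} - 1/\tilde{p}_0)\Bigr]
\]
and pair each perturbation term with the corresponding atomic power. This produces factors of the form $(\mu/\lambda^{n/a})^{1/p_0-1/r}$ and $(\nu/\lambda^{n/a})^{1/\tilde{p}_0-1/\tilde{r}}$. Depending on whether $\mu/\lambda^{n/a}$ is large or small, I would choose $r$ slightly below or slightly above $p_0$ (each direction available by hypothesis) so that the exponent has the sign producing decay, and independently for $\tilde{r}$ and $\nu$. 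Setting $\varepsilon>0$ smaller than both admissible perturbation widths then yields the claimed bound.

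The main obstacle I anticipate is the bookkeeping around which perturbation direction to pick in each regime of $\mu/\lambda^{n/a}$ and $\nu/\lambda^{n/a}$, and making sure the four choices can be made simultaneously while keeping the derivative parameter $s$ fixed — one must verify that the constructive proof of Theorem \ref{thm:localEstimates} indeed delivers local estimates in a full spatial neighborhood at fixed $s$ (this is visible from its parameters $\sigma_1,\sigma_2$ being free in $(\sigma,\sigma^\prime)$). Once that is granted, the remainder is a direct analogue of the argument in \cite[Lemma~5.1.,~p.~12]{Foschi2005}, with the sole addition that the frequency-localized bilinear form $B^N_Q$ carries the factor $N^{-2s}$ throughout.
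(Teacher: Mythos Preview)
Your proposal is correct and follows essentially the same route as the paper's proof: apply the rescaled local estimate \eqref{eq:rescaledInhomogEstI} at perturbed spatial exponents $(r,\tilde r)$, use H\"older in $t$ to separate $a,b$ from the atoms, bound the atoms by $\mu^{1/p_0-1/r}$ and $\nu^{1/\tilde p_0-1/\tilde r}$, rewrite $\lambda^{\beta_a(q,\tilde q,r,\tilde r,s)}$ via the affine identity you stated, and then choose the sign of each perturbation according to whether $\mu/\lambda^{n/a}$ (respectively $\nu/\lambda^{n/a}$) is $\geq 1$ or $<1$. The paper carries out exactly these steps, with the only cosmetic difference that it writes the combined factor $(\mu/\lambda^{n/a})^{1/p_0-1/p}(\nu/\lambda^{n/a})^{1/\tilde p_0-1/\tilde p}$ directly rather than first displaying the affine formula for $\beta_a$.
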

 \begin{proof}
 We employ the local estimates from \eqref{eq:localInhomogEstII} together with H\"older's inequality to find
 \begin{equation*}
 \begin{split}
 |B^N_Q(a \varphi_\mu,b \psi_\nu)| \leq& \; C_{n,p_0,q,\tilde{p}_0,\tilde{q}} \lambda^{\beta_a(q,\tilde{q},p,\tilde{p},s)}  \Vert a \Vert_{L^{q^\prime}(I)} \Vert b \Vert_{L^{\tilde{q}^\prime}(J)}  N^{-2s} \\
 & \Vert \varphi_\mu \Vert_{L_t^\infty(I;Z_{p^\prime})}  \Vert \psi_\nu \Vert_{L_t^\infty(J;Z_{\tilde{p}^\prime})} \\ 
 \leq& \; C_{n,p_0,q,\tilde{p}_0,\tilde{q}} \lambda^{\beta_a(q,\tilde{q},p,\tilde{p},s)} \Vert a \Vert_{L^{q^\prime}(I)} \Vert b \Vert_{L^{\tilde{q}^\prime}(J)} N^{-2s} \mu^{\frac{1}{p_0} - \frac{1}{p}} \nu^{\frac{1}{\tilde{p}_0}-\frac{1}{\tilde{p}}} \\
 =& \; C_{n,p_0,q,\tilde{p}_0,\tilde{q}} \lambda^{\beta_a(q,\tilde{q},p_0,\tilde{p}_0,s)} \Vert a \Vert_{L^{q^\prime}(I)} \Vert b \Vert_{L^{\tilde{q}^\prime}(J)} N^{-2s} \\
 & \; \left( \frac{\mu}{\lambda^{n/a}} \right)^{\frac{1}{p_0} - \frac{1}{p}} \left( \frac{\nu}{\lambda^{n/a}} \right)^{\frac{1}{\tilde{p}_0}-\frac{1}{\tilde{p}}}. 
 \end{split}
 \end{equation*}
 Since the local estimates hold in a full neighbourhood, we can choose $p$ and $\tilde{p}$ for given $\lambda, \mu, \nu$ with $\varepsilon = \varepsilon(p_0,\tilde{p}_0)>0$ and take the constant $C= C(n,p_0,q,\tilde{p}_0,\tilde{q})$ like in the proof of Lemma \ref{lem:timewisePerturbation}, so that
 \begin{equation*}
 \left( \frac{\mu}{\lambda^{n/a}} \right)^{\frac{1}{p_0} - \frac{1}{p}} = \left[ \frac{\mu}{\lambda^{n/a}} \right]^{- \varepsilon}, \; \; \; \left( \frac{\nu}{\lambda^{n/a}} \right)^{\frac{1}{\tilde{p}_0} - \frac{1}{\tilde{p}}} = \left[ \frac{\nu}{\lambda^{n/a}} \right]^{- \varepsilon},
 \end{equation*}
 which finishes the proof.
 \end{proof}
 We will perturb the spatial exponents $p$ and $\tilde{p}$ associated to decay parameters $\sigma_1$ and $\sigma_2$, but note that it is necessary to fix the sum of the derivative parameters. By this method we are unable to recover the estimates in the $L_t^q Z^s_p$-spaces, but only in the $Z^s_{p,q}$-spaces. In the specific case of spherical symmetry the $Z^s_p$-spaces become Besov spaces of spherically symmetric functions and the estimate can be concluded by an abstract interpolation argument via additional perturbation of the derivative parameters as demonstrated in \cite{Ovcharov2012}.
\begin{proof}[Proof of the sharp cases from Theorem \ref{thm:globalInhomogeneousEstimates}]
The above requirements on $(q,p,\tilde{q},\tilde{p},s)$ are sufficient to employ Lemma \ref{lem:spatialPerturbation} and we find
\begin{equation*}
\begin{split}
|B^N(F,G)| \leq& \; C_{n,p,q,\tilde{p},\tilde{q}} N^{-2s} \sum_{\lambda, \mu, \nu} \lambda^{\beta_a(q,\tilde{q},p,\tilde{p},s)} \left[ \frac{\mu}{\lambda^{n/a}} \right]^{- \varepsilon} \\
& \left[ \frac{\nu}{\lambda^{n/a}} \right]^{- \varepsilon} \sum_{I \times J \in \mathcal{Q}_\lambda} \Vert a_\mu \Vert_{L^{q^\prime}(I)} \Vert b_\nu \Vert_{L^{\tilde{q}^\prime}(J)}.
\end{split}
\end{equation*}
An application of Lemma \ref{lem:HoelderSequences}, which is possible due to $1/q + 1/\tilde{q} = 1 $, yields
\begin{equation*}
\begin{split}
|B^N(F,G)| \leq& \; C_{n,p,q,\tilde{p},\tilde{q}} N^{-2s} \sum_{ \mu, \nu} \Vert a_\mu \Vert_{L^{q^\prime}(\mathbb{R})} \Vert b_\nu \Vert_{L^{\tilde{q}^\prime}(\mathbb{R})} \\
& \left( \sum_{\lambda} \lambda^{\beta_a(q,\tilde{q},p,\tilde{p},s)} \left[ \frac{\mu}{\lambda^{n/a}} \right]^{- \varepsilon} \left[ \frac{\nu}{\lambda^{n/a}} \right]^{- \varepsilon} \right).
\end{split}
\end{equation*}
	Since we are at the scaling invariant case $\beta_a(q,\tilde{q},p,\tilde{p},s)=0$, we can perform the sum over $\lambda$ and find
\begin{equation*}
\sum_\lambda \left[ \frac{\mu}{\lambda^{n/a}} \right]^{- \varepsilon} \left[ \frac{\nu}{\lambda^{n/a}} \right]^{- \varepsilon} \lesssim_\varepsilon \left( 1 + \log \left[ \frac{\mu}{\nu} \right] \right) \left[ \frac{\mu}{\nu} \right]^{- \varepsilon} = c_{\mu/\nu}.
\end{equation*}
Therefore, we find
\begin{equation*}
|B^N(F,G)| \leq C_{n,p,q,\tilde{p},\tilde{q}} N^{-2s} \sum_{\mu,\nu} \Vert a_\mu \Vert_{L^{q^\prime}(\mathbb{R})} \Vert b_\nu \Vert_{L^{\tilde{q}^\prime}(\mathbb{R})} c_{\mu/\nu}.
\end{equation*}
As in the first part of the proof of Theorem \ref{thm:globalInhomogeneousEstimates}, the sequence $(c_\alpha)$ is absolutely summable and we can apply Lemma \ref{lem:YoungSequences} to arrive at the estimate
\begin{equation*}
\begin{split}
|B^N(F,G)| &\leq C_{n,p,q,\tilde{p},\tilde{q}} N^{-2s} \left( \sum_\mu \Vert a_\mu(t) \Vert^{q^\prime}_{L_t^{q^\prime}(\mathbb{R})} \right)^{1/q^\prime} \left( \sum_\nu \Vert b_\nu(t) \Vert^{\tilde{q}^\prime}_{L_t^{\tilde{q}^\prime}(\mathbb{R})} \right)^{1/\tilde{q}^\prime} \\
&=  C_{n,p,q,\tilde{p},\tilde{q}} N^{-2s} \left\Vert \left( \sum_\mu a_\mu(t)^{q^\prime} \right)^{1/q^{\prime}} \right\Vert_{L_t^{q^\prime}(\mathbb{R})} \left\Vert \left( \sum_\nu b_\nu(t)^{\tilde{q}^\prime} \right)^{1/\tilde{q}^{\prime}} \right\Vert_{L_t^{\tilde{q}^\prime}(\mathbb{R})}.
\end{split}
\end{equation*}
Finally, we use the embeddings $\ell^q \hookrightarrow \ell^p$ and $\ell^{\tilde{q}} \hookrightarrow \ell^{\tilde{p}}$, which gives
\begin{equation*}
\begin{split}
|B^N(F,G)| &\leq C_{n,p,q,\tilde{p},\tilde{q}} N^{-2s} \left\Vert \left( \sum_\mu a_\mu(t)^{p^\prime} \right)^{1/p^{\prime}} \right\Vert_{L_t^{q^\prime}(\mathbb{R})} \left\Vert \left( \sum_\nu b_\nu(t)^{\tilde{p}^\prime} \right)^{1/\tilde{p}^{\prime}} \right\Vert_{L_t^{\tilde{q}^\prime}(\mathbb{R})} \\
&= C_{n,p,q,\tilde{p},\tilde{q}} N^{-2s} \left\Vert \Vert F(t) \Vert_{Z_{p^\prime}} \right\Vert_{L_t^{q^\prime}} \left\Vert \Vert G(t) \Vert_{Z_{\tilde{p}^\prime}} \right\Vert_{L_t^{\tilde{q}^\prime}},
\end{split}
\end{equation*}
when in the last step we have used the properties of atomic decomposition.
\end{proof}
\section{Applications}
\label{section:applications}
Next, we give two instances of generalized Strichartz estimates: First, we see that requiring the wave-functions to be spherically symmetric yields generalized Strichartz estimates and further, we see that taking spherical averages yields generalized Strichartz estimates.
As future extended decay parameters we set
\begin{align*}
 \sigma^\prime(a,n) =\left\{\begin{array}{cl} n-1, &\; \mbox{if } a =1, \\
 \frac{2n-1}{2}, &\; \mbox{if } a > 1. \end{array} \right.
\end{align*}
Note that the estimates found after taking spherical averages imply the estimates found after requiring spherical symmetry for Schr\"odinger-like equations. But since we would like to stress the existence of a unified framework which is built up in Section \ref{section:preliminaries} allowing one to prove inhomogeneous estimates from homogeneous estimates we chose to present the results separately, also with a view towards possible generalizations with respect to the dispersion relation since the Strichartz estimates for spherically symmetric wave-functions are known for a much larger class than for the Schr\"odinger-like equations (cf. \cite{Cho2013}).
\subsection{Inhomogeneous estimates found after taking spherical symmetry}
\label{subsection:sphericalSymmetry}
In \cite{Cho2013} Cho and Lee showed that penalizing anisotropic propagation results in the following additional homogeneous Strichartz estimates for dispersion relations respecting spherical symmetry and their results imply the following corollary:
 \begin{theorem}[Special case of {\cite[Theorem~1.2.,~p.~997]{Cho2013}} \\ and  {\cite[Section~4.5.,~pp.~1017-1018]{Cho2013}}]
 \label{thm:radStrEst}
 $\,$ \\ Let $q,p \geq 2, \; a \geq 1$ and  $n \geq 2$ for $a > 1$ and $n \geq 3$ for $a =1$ and suppose that 
 \begin{equation*}
 \label{eq:conditionsOnQRadStrEst}
 \sigma(a,n) \left( \frac{1}{2} - \frac{1}{p} \right) < \frac{1}{q} < \sigma^\prime(a,n) \left( \frac{1}{2} - \frac{1}{p} \right). 
 \end{equation*}
Then we find the estimate 
\begin{equation*}
 \left\Vert P_N e^{it D^a} u_0 \right\Vert_{L_t^q L_x^p} \lesssim_{n,p,q} N^{-s} \Vert u_0 \Vert_{L^2(\mathbb{R}^n)} 
\end{equation*}
to hold for spherically symmetric $u_0$ where $s = - \frac{n}{2} + \frac{n}{p} + \frac{a}{q} $.
\end{theorem}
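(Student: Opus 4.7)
The plan is to combine an improved dispersive bound adapted to radial functions with the standard Keel--Tao $TT^*$ machinery, using the extended decay parameter $\sigma'(a,n)$ in place of the ordinary $\sigma(a,n)$. By scaling and by the frequency localization to $|\xi|\sim N$, it suffices to prove the unit-frequency estimate
\[
\bigl\| P_1 e^{itD^a} u_0 \bigr\|_{L^q_t L^p_x} \lesssim \|u_0\|_{L^2},
\]
for radial $u_0$; the factor $N^{-s}$ is recovered by undoing the scaling. The range condition $\sigma(a,n)(1/2-1/p) < 1/q$ places us outside the classical Knapp-type range, so the improvement must come from radial symmetry; the upper bound $1/q < \sigma'(a,n)(1/2-1/p)$ is what the enhanced dispersive estimate can afford.

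The heart of the argument is the radial dispersive estimate
\[
\bigl\| P_1 e^{itD^a} u_0 \bigr\|_{L^\infty_x} \lesssim |t|^{-\sigma'(a,n)} \|u_0\|_{L^1_x} \quad (t\ne 0),
\]
valid for radial $u_0$. To prove it, I would use Bochner's formula to represent $P_1 e^{itD^a}u_0(x)$, with $r=|x|$, as
\[
c_n \int_0^\infty e^{it\rho^a}\,\chi(\rho)\,\widetilde{u_0}(\rho)\,(r\rho)^{-(n-2)/2} J_{(n-2)/2}(r\rho)\,\rho^{n-1}\,d\rho,
\]
where $\widetilde{u_0}$ is the radial profile of $\hat u_0$. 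Splitting into the regimes $r\rho\lesssim 1$ and $r\rho\gg 1$, on the former piece one uses the bound $|J_\nu(z)|\lesssim z^\nu$, and on the latter piece the standard asymptotic expansion $J_\nu(z)= z^{-1/2}(e^{iz}a_+(z)+e^{-iz}a_-(z))$ with $|a_\pm^{(k)}(z)|\lesssim z^{-k}$ produces an additional $(r\rho)^{-1/2}$ gain compared with the non-radial case. A Van der Corput / stationary phase analysis in $\rho$ for the resulting phase $t\rho^a\pm r\rho$ then provides decay in $t$ of order $|t|^{-1/2}$ for $a>1$ and no further decay in the wave case, and combining the spatial and temporal gains yields the exponents $\sigma'(a,n)=(2n-1)/2$ for $a>1$ and $\sigma'(1,n)=n-1$ for $a=1$.

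Given the improved dispersive bound together with the trivial energy conservation $\|e^{itD^a}u_0\|_{L^2}=\|u_0\|_{L^2}$, I would invoke Keel--Tao's abstract Strichartz theorem applied to the restriction of $e^{itD^a}$ to the closed subspace of radial $L^2$ functions: the decay parameter there is $\sigma'$ rather than $\sigma$, so Keel--Tao returns the full range $1/q \le \sigma'(a,n)(1/2-1/p)$ with $p,q\ge 2$, except possibly at the corresponding sharp endpoint. The strict inequality $1/q<\sigma'(a,n)(1/2-1/p)$ in the hypothesis keeps us away from that endpoint and so no further bilinear $\ell^2$ summation is needed. The main obstacle is the dispersive estimate: controlling the Bessel integral uniformly across the transition $r\rho\sim 1$ and across the possibly degenerate stationary points of $t\rho^a\pm r\rho$ (in particular when $r$ and $|t|$ balance) requires a careful dyadic decomposition in $\rho$, together with an application of Van der Corput of order matching the vanishing of $\partial_\rho^2(t\rho^a)$; the case $a=1$ is handled separately using the explicit formula for the wave propagator restricted to radial data, which produces the improved gain $n-1$.
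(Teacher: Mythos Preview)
The paper does not prove this theorem; it is quoted from Cho--Lee \cite{Cho2013} (and from \cite{Sterbenz2005} for $a=1$) and is used only as an input to the general framework. In fact the paper explicitly warns, in the paragraph introducing the notion of ``extended decay parameter'', that the name is to be understood \emph{morally}: the $L^1\to L^\infty$ dispersive estimates \eqref{eq:untruncatedDecayEstimate}, \eqref{eq:truncatedDecayEstimate} are \emph{not} improved under radial symmetry; what improves is that the worst decay occurs only on a thin set.

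Your proposal rests precisely on the claim the paper rules out. The asserted bound
\[
\bigl\| P_1 e^{itD^a} u_0 \bigr\|_{L^\infty_x} \lesssim |t|^{-\sigma'(a,n)} \|u_0\|_{L^1_x}
\]
for radial $u_0$ is false. To see this, evaluate at the origin: $P_1 e^{itD^a}u_0(0)=\int K_t(-y)\,u_0(y)\,dy$ with $K_t(z)=\int e^{it|\xi|^a}\chi(\xi)e^{iz\cdot\xi}\,d\xi$. For $|z|\sim |t|$ the phase $t|\xi|^a+z\cdot\xi$ has a nondegenerate critical point on the support of $\chi$ with Hessian of size $\sim t$ in all $n$ directions, so $|K_t(z)|\sim |t|^{-n/2}$. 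Taking a radial $u_0$ concentrated on the annulus $|y|\sim|t|$ then gives $|P_1 e^{itD^a}u_0(0)|\sim |t|^{-n/2}\|u_0\|_{L^1}$, i.e.\ no better than the classical rate $\sigma(a,n)=n/2$ when $a>1$. The Bessel computation you sketch does produce an extra factor $(r\rho)^{-1/2}$, but this is a gain in $r$, not in $t$; at $r=0$ it disappears, and more generally it cannot be traded for additional $t$-decay uniformly in $r,r'$. Consequently the Keel--Tao machinery fed with decay exponent $\sigma'$ is not available, and the argument as written does not go through.

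The actual proofs in \cite{Cho2013}, \cite{Sterbenz2005}, \cite{Guo2014} exploit the $r$-gain differently: one either runs a $TT^*$ argument with a pointwise kernel bound that keeps track of the spatial weight (so that the bad set $\{r\sim|t|\}$ is seen to be thin), or one passes to weighted $L^2$/square-function estimates for the one-dimensional Bessel-type operators. If you want to repair your outline, replace the $L^1\to L^\infty$ step by an estimate of the form $|K(t,r,r')|\lesssim |t|^{-1/2}(rr')^{-(n-1)/2}$ (for $a>1$, in the main region) and then show directly that the resulting integral operator is bounded $L^{p'}\to L^p$ with the claimed range; this is essentially what the cited references do.
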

The special case $a=1$ was already covered in \cite{Sterbenz2005}, testing the estimates against Knapp-type examples one finds the range of integrability coefficients to be sharp up to endpoints.
We argue that Theorem \ref{thm:radStrEst} gives rise to generalized Strichartz estimates. The range spaces are $L^p$-spaces of spherically symmetric functions, which certainly have the compatibility property, also observe the identification:
\begin{equation*}
\mathcal{L}_r^p = L^p((0,\infty), r^{n-1} dr) \leftrightarrow \left\{ f \in L^p(\mathbb{R}^n) \, | \, f \mbox { spherically symmetric} \right\}
\end{equation*}
 Furthermore, the generalized Strichartz estimates admit the generalized dispersive estimate because the propagator respects spherical symmetry. Altogether, we find an instance of Theorem \ref{thm:globalInhomogeneousEstimates} to hold for spherically symmetric wave-functions with extended decay parameter defined above.\\
 We observe that the range of homogeneous estimates in Theorem \ref{thm:radStrEst} remains valid for general initial data if one considers norms of the initial data taking into account the regularity in the spherical coordinates. Performing an additional Littlewood-Paley decomposition in the spherical coordinates, one will be able to prove inhomogeneous estimates in the same range with derivative loss in the spherical coordinates.
 \subsection{Inhomogeneous estimates found after taking spherical averages}
 \label{subsection:sphericalAverages}
  For the wave equation and Schr\"odinger-like equations it is known, that the additional Strichartz estimates, which exist for spherically symmetric initial data, become also possible for general initial data when one requires a lower angular integrability of the corresponding free solutions.
The corresponding theorem on homogeneous estimates states as follows:
 \begin{theorem}[{\cite[Theorem~1.4.,~p.~4]{Jiang2012}}, {\cite[Theorem~1.1.,~p.~3]{Guo2014}}]
 \label{thm:sphAvgStrichartzEstimates}
 Let $a \geq 1, \; n \geq 3,\; q,p \geq 2$ and suppose that 
 \begin{equation*}
 \label{eq:conditionsOnQRadStrEstC}
 \sigma(a,n) \left( \frac{1}{2} - \frac{1}{p} \right) < \frac{1}{q} < \sigma^\prime(a,n) \left( \frac{1}{2} - \frac{1}{p} \right). 
 \end{equation*}
We find the estimate 
\begin{equation}
\label{eq:sphAvgStrEst}
 \left\Vert P_N e^{it D^a} u_0 \right\Vert_{L_t^q \mathcal{L}_r^p L_\omega^2} \lesssim_{n,p,q} N^{-s} \Vert u_0 \Vert_{L^2}
\end{equation} 
 to hold for any $N \in 2^{\mathbb{Z}}$ with $s = - \frac{n}{2} + \frac{n}{p} + \frac{a}{q} $.
 \end{theorem}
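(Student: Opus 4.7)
The plan is to derive Theorem~\ref{thm:sphAvgStrichartzEstimates} as a Keel--Tao-type consequence of two ingredients: a vector-valued energy estimate and an enhanced vector-valued dispersive estimate, both with range Hilbert space $L^2_\omega$. After frequency-localizing to $|\xi|\sim N$ and rescaling to reduce to $N=1$, the energy piece is tautological: it is merely the unitarity of $e^{itD^a}$ on $L^2(\mathbb{R}^n)=\mathcal{L}^2_r L^2_\omega$.

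The crux is the improved dispersive estimate
\begin{equation*}
\|P_1 e^{itD^a} u_0\|_{\mathcal{L}^\infty_r L^2_\omega} \lesssim |t|^{-\sigma'(a,n)} \|u_0\|_{\mathcal{L}^1_r L^2_\omega}, \qquad |t|\geq 1,
\end{equation*}
which upgrades the classical decay $\sigma(a,n)$ by $(n-1)/2$; this gain reflects that one only measures angular $L^2$-behaviour. I would establish it by expanding $u_0=\sum_{l,m} u_{0,l,m}(r)Y_{l,m}(\omega)$ in spherical harmonics: since $|\xi|^a$ is a radial multiplier, $e^{itD^a}$ preserves each spherical harmonic subspace and acts on the radial profile through an explicit one-dimensional oscillatory integral with phase $t\rho^a$ whose Schwartz kernel involves Bessel functions $J_{\nu_l}$ of order $\nu_l=l+(n-2)/2$. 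Using Plancherel on $\mathbb{S}^{n-1}$, the bound reduces to a kernel estimate that is uniform in $l$, and combining the stationary-phase contribution from $t\rho^a$ with the Bessel asymptotics $J_{\nu}(r\rho)\sim(r\rho)^{-1/2}$ yields the extra $|t|^{-(n-1)/2}$ that is lost in the classical $L^1_x\to L^\infty_x$ dispersive bound but is recovered once one passes to $L^2_\omega$.

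With the energy and enhanced dispersive estimates in hand, the vector-valued version of the Keel--Tao $TT^*$-bilinear interpolation from \cite{TaoKeel1998} (applied with target $L^2_\omega$ and using the complex interpolation of the vector-valued $\mathcal{L}^p_r(L^2_\omega)$-spaces in $p$ that is built into the compatibility property \eqref{eq:vectorValuedRangeSpace}) produces the full family
\begin{equation*}
\|P_N e^{itD^a} u_0\|_{L^q_t \mathcal{L}^p_r L^2_\omega} \lesssim N^{-s} \|u_0\|_{L^2}, \qquad p,q\geq 2,
\end{equation*}
in the strict range $\sigma(a,n)(1/2-1/p) < 1/q < \sigma'(a,n)(1/2-1/p)$, with $s=-n/2+n/p+a/q$ fixed by the $L^p$-dilation property of $Z_p=\mathcal{L}^p_r L^2_\omega$. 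The strict inequalities correspond to the non-endpoint regime in which the interpolation lifts the pair of endpoint bounds without additional analysis.

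The main obstacle is the Bessel-kernel analysis behind the enhanced dispersive estimate: the uniform-in-$l$ control requires separating the oscillatory regime $r\rho\gg\nu_l$, the transitional regime $r\rho\sim\nu_l$ where the Bessel asymptotics change form, and the exponentially decaying regime $r\rho\ll\nu_l$, and the bookkeeping must be sharp enough that the Plancherel summation in $l$ reconstitutes the $L^2_\omega$-norm with no loss. The case $a=1$ is delicate for a different reason: the phase $t\rho$ has no interior critical point, so the gain comes from interference on the light cone rather than from ordinary stationary phase, but the overall scheme is the same.
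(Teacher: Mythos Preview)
First, note that the paper does not prove Theorem~\ref{thm:sphAvgStrichartzEstimates}; it is quoted from \cite{Jiang2012} and \cite{Guo2014} and serves only as input showing that the spaces $\mathcal{L}^p_r L^2_\omega$ fit the abstract framework of Section~\ref{subsection:setup}. So there is no proof in the paper to compare against.

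That said, your proposed route has a genuine gap: the enhanced dispersive estimate
\[
\|P_1 e^{itD^a} u_0\|_{\mathcal{L}^\infty_r L^2_\omega} \lesssim |t|^{-\sigma'(a,n)} \|u_0\|_{\mathcal{L}^1_r L^2_\omega},\qquad |t|\geq 1,
\]
is \emph{false}. Test it against radial data. If $u_0$ is spherically symmetric then so is $P_1 e^{itD^a} u_0$, and for a radial function $f$ one has $\|f\|_{\mathcal{L}^\infty_r L^2_\omega}=c_n\|f\|_{L^\infty_x}$ and $\|f\|_{\mathcal{L}^1_r L^2_\omega}=c_n^{-1}\|f\|_{L^1_x}$. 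Hence on the radial subspace your estimate would force the classical $L^1_x\to L^\infty_x$ bound with decay $|t|^{-\sigma'}$; but the rate $|t|^{-\sigma}$ is already sharp for radial initial data (take a radial unit-frequency bump at the origin and evaluate at $|x|$ comparable to $t$ by stationary phase). Angular $L^2$-averaging cannot manufacture extra decay on functions that are constant in the angular variable, so a Keel--Tao argument driven by this single dispersive inequality cannot reach beyond the classical $\sigma$-line.

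The proofs in \cite{Jiang2012,Guo2014} are accordingly more subtle: the gain past the classical line does not come from a faster pointwise-in-$t$ decay but from a finer decomposition of the propagator (spherical-harmonic localization combined with weighted/smoothing estimates and a square-function recombination, or Fourier-restriction type input), which exploits that the Knapp examples saturating the classical bound occupy a thin angular set and are therefore suppressed by the $L^2_\omega$ average. Your spherical-harmonic reduction is the right first move, but the uniform-in-$l$ Bessel kernel bound you sketch can only recover $\sigma$; to reach $\sigma'$ one must sum the $l$-pieces in $\ell^2$ rather than take a supremum, and that step does not collapse to a single dispersive inequality feeding into Keel--Tao.
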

 We show that the above theorem is another instance of generalized Strichartz estimates with the range spaces $\mathcal{L}_r^p L_\omega^2$. The embedding $\mathcal{L}_r^p L_\omega^2 \hookrightarrow \mathcal{S}^\prime(\mathbb{R}^n)$ is clear from H\"older's inequality like the vector-valued $L^p$-structure.
 We still have to show compatibility with respect to frequency localization, which we do with the following lemma, extending Young's inequality. Let $\mu$ be the Haar measure on $SO(n)$, and denote $L^q_A = L^q(SO(n), \mu)$.\\
  The following identities are given in \cite[Lemma~3.1.,~Lemma~3.2.,~p.~255]{GuoLee2014}.\\
   First, we note that for any $p,q \in [1,\infty]$
  \begin{equation*}
  \label{eq:mixedNormIdentity}
  \Vert f \Vert_{\mathcal{L}_r^p L_\omega^q} \sim_{n,p,q} \Vert f \Vert_{L_x^p L_A^q},
  \end{equation*}
  which is straight-forward. This identification allows us to easily prove the following extension to Young's inequality in Euclidean space:
 \begin{lemma}[Young's inequality with mixed norms]
 \label{lem:YoungMixed}
$\,$ \\Suppose that $1 \leq p, q, p_1, p_2, q_1, q_2 \leq \infty, \; \frac{1}{q} = \frac{1}{q_1} + \frac{1}{q_2}, \; 1+ \frac{1}{p} = \frac{1}{p_1} + \frac{1}{p_2}.$ Then we find the following estimate to hold:
\begin{equation*}
\Vert f * g \Vert_{\mathcal{L}_r^p L_\omega^q} \leq C_{n,p,q} \Vert f \Vert_{\mathcal{L}_r^{p_1} L_\omega^{q_1}} \Vert g \Vert_{\mathcal{L}_r^{p_2} L_\omega^{q_2}}.
\end{equation*}
 \end{lemma}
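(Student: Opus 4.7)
The plan is to exploit the equivalence $\|f\|_{\mathcal{L}_r^p L_\omega^q} \sim_{n,p,q} \|f\|_{L_x^p L_A^q}$ recorded just before the statement, which reformulates the mixed radial-angular norm as a mixed Euclidean-Haar norm. Working in the $L_x^p L_A^q$ formulation is advantageous because the Haar measure on $SO(n)$ is bi-invariant, and Euclidean convolution interacts cleanly with translations composed with rotations. After the reduction I would follow the usual proof of Young's inequality, but inserting one extra application of Hölder in $R$ to handle the angular part.

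The concrete steps I would carry out are as follows. First, writing $(f*g)(Rx) = \int f(Ry)\,g(R(x-y))\,dy$ by the change of variable $y \mapsto Ry$, Minkowski's integral inequality in $L_A^q(dR)$ allows me to pull the $L_A^q$-norm inside the $dy$-integral, giving
\[
\|(f*g)(R\,\cdot\,)\|_{L_A^q(dR)}(x) \leq \int \|f(Ry)\,g(R(x-y))\|_{L_A^q(dR)}\,dy.
\]
Second, since $\tfrac{1}{q} = \tfrac{1}{q_1} + \tfrac{1}{q_2}$, Hölder in $R \in SO(n)$ gives
\[
\|f(Ry)\,g(R(x-y))\|_{L_A^q(dR)} \leq \|f(Ry)\|_{L_A^{q_1}(dR)}\,\|g(R(x-y))\|_{L_A^{q_2}(dR)}.
\]
Set $F(y) = \|f(Ry)\|_{L_A^{q_1}(dR)}$ and $G(z) = \|g(Rz)\|_{L_A^{q_2}(dR)}$. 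By the bi-invariance of Haar measure, both $F$ and $G$ are radial functions on $\mathbb{R}^n$, and combining the above two inequalities yields the pointwise bound
\[
\|(f*g)(R\,\cdot\,)\|_{L_A^q(dR)}(x) \leq (F*G)(x).
\]

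Third, taking $L_x^p$ on both sides and invoking the classical Young inequality in $\mathbb{R}^n$ with $1 + \tfrac{1}{p} = \tfrac{1}{p_1} + \tfrac{1}{p_2}$ gives
\[
\|f*g\|_{L_x^p L_A^q} \leq \|F*G\|_{L_x^p} \leq \|F\|_{L_x^{p_1}}\,\|G\|_{L_x^{p_2}}
    = \|f\|_{L_x^{p_1} L_A^{q_1}}\,\|g\|_{L_x^{p_2} L_A^{q_2}},
\]
and one final application of the norm equivalence converts this back to $\mathcal{L}_r^{p_i} L_\omega^{q_i}$ norms, completing the proof.

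I do not anticipate a true obstacle: the only point that needs a bit of care is verifying radiality of $F$ and $G$ so that Young's inequality in $\mathbb{R}^n$ is applied to honest functions on $\mathbb{R}^n$ whose $L^{p_i}_x$ norms match the desired mixed norms up to dimensional constants. Everything else is a standard Minkowski-Hölder-Young chain; the endpoint cases $q_i \in \{1,\infty\}$ or $p_i \in \{1,\infty\}$ are handled by the usual interpretations of the inner and outer norms.
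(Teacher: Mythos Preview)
Your proof is correct and follows essentially the approach the paper indicates: the paper does not give a self-contained proof but cites \cite{GuoLee2014}, noting that the identification $\Vert f \Vert_{\mathcal{L}_r^p L_\omega^q} \sim \Vert f \Vert_{L_x^p L_A^q}$ ``allows us to easily prove'' the lemma and that the last step of the cited proof is (a version of) Young's inequality in $\mathbb{R}^n$. Your Minkowski--H\"older--Young chain in the $L_x^p L_A^q$ formulation is exactly this argument spelled out.
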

We note that in the last step of the proof given in \cite{GuoLee2014} one could use weak Young's inequality. Since the Riesz-potential and the Bessel-potential are given by convolution with a spherically symmetric function, we conclude that Sobolev embedding remains valid in the $\mathcal{L}_r^p L_\omega^2$-spaces.\\
Specifically, we find from the above lemma that frequency localization yields a continuous operator in the $\mathcal{L}_r^p L_\omega^2$-spaces because frequency localization can be perceived as convolution with a spherically symmetric Schwartz function.\\ 
 The generalized dispersive estimate follows from two applications of H\"older's inequality and we conclude that spherically averaged estimates yield another instance of generalized Strichartz estimates and we find another instance of Theorem \ref{thm:globalInhomogeneousEstimates} to hold.\\
However, the application of H\"older's inequality to find the dispersive estimate in the $\mathcal{L}_r^p L_\omega^2$-spaces produces slack in the results. Alternatively, when one looks for the local estimates, one can directly interpolate with estimate \eqref{eq:genDispEst} and use H\"older's inequality in the spherical coordinates afterwards.\\
\subsection{Application to the fractional Schr\"odinger equation with potential}
\label{subsection:fractionalSEQApplication}
Finally, we give a more sophisticated application of the additional inhomogeneous estimates. Note the trivial application that the additional inhomogeneous estimates allow us to bind the weak solution to an inhomogeneous equation with zero-initial value in certain $L_t^q L_x^p$-norms, in which the weak solution to the homogeneous equation with non-vanishing initial value can't be bounded in general.
In \cite{Cho2016} had been considered the fractional Schr\"odinger equation with spherically symmetric initial data $u_0$ and potential $V$, where $1<a<2$:
 \begin{equation}
 \label{eq:fractionalEquationwithPotential}
 \left\{\begin{array}{cl}
 i \partial_t u(t,x) + D^a u(t,x) = V(t,x) u(t,x), \; (t,x) \in (\mathbb{R},\mathbb{R}^n), \\
 u(0,\cdot) = u_0  \end{array} \right.
 \end{equation}
 The main ingredient to the proof of well-posedness with initial data below $L^2$ are inhomogeneous estimates, which do not follow from the homogeneous estimates and the Christ-Kiselev lemma.\\
 We see how employing the additional inhomogeneous estimates allow us to drop the assumptions on spherical symmetry, but we have to require some angular regularity for the potential and slightly more Sobolev regularity for the potential and the initial data; we stay below $L^2$ though.
 We proceed by sketching the proof of \cite[Theorem~1.2.,~p.~1908]{Cho2016}. We shall see how additional inhomogeneous estimates make the proof possible. When we want to drop the assumptions on spherical symmetry, we make use of the estimates provided by Corollary \ref{cor:inhomogeneousEstimatesApplication}.
  \begin{proof}[Proof of Corollary \ref{cor:inhomogeneousEstimatesApplication}]
 We show that the conditions from Theorem \ref{thm:globalInhomogeneousEstimates} are fulfilled; first by checking the conditions on the diagonal $q=\tilde{q}$. If the inequalities hold on the diagonal strictly, the claim follows from continuous dependence.\\
 We have $$\mu(a) = \frac{(a/2) \left( \sigma_1/2 + \sigma_2/2 - n/2 \right)}{n(1-a/2) + \left( (a \sigma_1 -n)/(2q) + (a \sigma_2 -n)/(2q) \right)} \mbox{ and } q=\frac{2(n+a)}{n}. $$
  For $a=2$ this gives $\mu(a=2) = \frac{q}{2}$, and we find that the inequalities $\mu \leq p/2 $ and $\mu \leq \tilde{p}/2$ from Theorem \ref{thm:globalInhomogeneousEstimates} hold with equality, when all of the other inequalities are strict.\\
  For the derivatives of $\mu$ and $q$ we find
  $$ \mu^\prime(a=2) = \frac{q}{4} - \left( \frac{1}{2(\sigma_1 + \sigma_2-n)} \right) \left( 2+ \sigma_1 +  \sigma_2 - \frac{n}{2} q^2 \right)  \mbox{ and } 
  q^\prime(a=2)=\frac{2}{n}.$$
  We find that $\mu$ decreases much faster than $q$ as we lower $a$ starting from $a=2$ for $\sigma_1, \sigma_2 \downarrow n/2$, which yields the claim.
 \end{proof}
 Connecting these estimates which will be employed in the proof to certain regions in Figure \ref{fig:inhomogeneousEstimatesFigure} will clarify how additional inhomogeneous estimates establish well-posedness with negative Sobolev regularity.
 \begin{figure}[h]
	\begin{tikzpicture}[>=stealth,scale=0.55]
 	\node[shape=circle,inner sep=2pt, minimum size=2pt,label=right:$A$,draw]		(A) at (5,7) {};
 	\node[shape=circle,inner sep=2pt, minimum size=2pt,label=left:$B$,draw]		(B) at (4.5,4.5) {};
 	\node[shape=circle,inner sep=2pt, minimum size=2pt,label=right:$C$,draw]		(C) at (6,6) {};
 	\node[shape=circle,inner sep=2pt, minimum size=2pt,label=below:$D$,draw]		(D) at (7,0) {};
 	\draw[->, very thick]	(0,0) -- (D) -- (8,0) node[anchor=north] {$\frac{1}{p} \, \left( \frac{1}{\tilde{p}} \right)$} ;
 	\draw[->, very thick]	(0,0) node[anchor=east] {$O$} -- (0,8) node[anchor=east] {$\frac{1}{q} \, \left( \frac{1}{\tilde{q}} \right)$};
 	\draw[dashed]	(0,0) -- (B) -- (C);
 	\draw (D) -- (A);
	\draw (A) -- (B) -- (D) -- (C) -- (A);
 	\draw[dashed] (A) -- (0,7) node[anchor=east] {$\frac{1}{2}$};
	\end{tikzpicture}
		\caption{ We give a pictorial representation similar to \cite[Figure~1,~p.~1907]{Cho2016}. 
In the setting of \cite[Corollary~1.,~p.~1907]{Cho2016} we find $A=(\frac{n-a}{2n},\frac{1}{2}), \, B= (\frac{n}{n+a}-\frac{n}{2(n+1)},\frac{n}{n+a}-\frac{n}{2(n+1)}), \, C=(\frac{n}{2(n+1)},\frac{n}{2(n+1)}), \, D=(\frac{1}{2},0)$, where the open line $\overline{BC}$ corresponds to the range from \cite[Theorem~1.2.,~p.~1908]{Cho2016} and the closed line $\overline{AD}$ corresponds to estimates found from factorization and application of the Christ-Kiselev lemma.}
	\label{fig:inhomogeneousEstimatesFigure}
\end{figure}
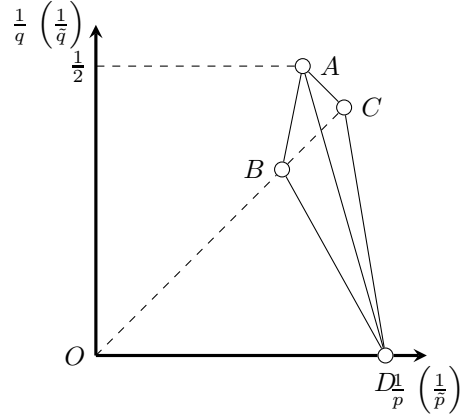 
  \begin{proof}[Proof of {\cite[Theorem~1.2.,~p.~1908]{Cho2016}} and dropping spherical symmetry]$\,$ \\
 We will see that the solution mapping
\begin{equation*}
 \Phi_{u_0}(u) = e^{itD^a} u_0 -i \int_0^t e^{i(t-s)D^a} (Vu)(s) ds
 \end{equation*}
 is a contraction mapping if we choose an adequate resolution space. The proof from \cite{Cho2016} can be divided up into the following two steps:
 \begin{enumerate}
 \item[(i)] Establishing local well-posedness in some $L_t^q([0,\tau],L_x^p)$-space for a small $\tau >0$,
 \item[(ii)] Iterating the process a finite number of times after showing that the Sobolev regularity is conserved. Since we show that we control the $L_t^\infty \dot{H}^\gamma_x$-norm continuity follows from the usual approximation argument.
 \end{enumerate}
 When we want to drop the assumptions on spherical symmetry, we work in a space given by the norm
 \begin{equation*}
\Vert F \Vert_{Y} = \sum_{N} \Vert P_N F \Vert_{L_t^{q} \mathcal{L}_r^{p} L_\omega^2},
\end{equation*}
when the solution mapping $\Phi_{u_0}$ remains unchanged of course. In this case working with a $1$-norm has practical benefits. Note that we have from the triangle inequality
$ \Vert u \Vert_{L_t^{q} \mathcal{L}_r^{p} L_\omega^2} \leq \Vert u \Vert_Y$. In the second step we iterate with respect to $L_t^\infty \dot{B}^\gamma_{2,1}$.\\
For the homogeneous part of the solution mapping we find from the homogeneous Strichartz estimates
\begin{equation*}
\Vert e^{itD^a} u_0 \Vert_{Y} \lesssim_{n,p,q} \Vert u_0 \Vert_{\dot{B}^\gamma_{2,1}}
\end{equation*}
and for the inhomogeneous part we find for a frequency localized component by virtue of the estimates found in Theorems \ref{thm:globalInhomogeneousEstimates} under the associated assumptions
\begin{equation*}
\Vert P_N \int_{0}^t e^{i(t-s)D^a} (Vu)(s) ds \Vert_{L_t^{q} \mathcal{L}_r^{p} L_\omega^2} \lesssim_{n,p,q,\tilde{p},\tilde{q}} \Vert \tilde{P}_N (Vu) \Vert_{L_t^{\tilde{q}^\prime} \mathcal{L}_r^{\tilde{p}^\prime} L_\omega^2}.
\end{equation*}
Ad (i): We find
\begin{equation*}
\Vert \Phi_{u_0}(u) \Vert_{Y} \lesssim_{n,p,q,\tilde{p},\tilde{q}} \Vert u_0 \Vert_{\dot{B}^\gamma_{2,1}} + \sum_N \Vert \tilde{P}_N (Vu) \Vert_{L_t^{\tilde{q}^\prime}([0,\tau], \mathcal{L}_r^{\tilde{p}^\prime} L_\omega^2)}
\end{equation*}
where we make use of the homogeneous estimates, which demands
\begin{equation}
\label{eq:linePQGamma}
\frac{a}{q} + \frac{n}{p} = \frac{n}{2} - \gamma 
\end{equation} 
to bind the homogeneous part; the admissibility follows from requirements on $a$ and $\gamma$. Further, $(q,p,\tilde{q},\tilde{p})$ must be in the range of the additional inhomogeneous estimates, which gives
\begin{equation}
\label{eq:linetildePtildeQGamma}
\frac{a}{\tilde{q}} + \frac{n}{\tilde{p}} = \frac{n}{2} + \gamma
\end{equation}
from plugging in the scaling condition. The requirement that \eqref{eq:linePQGamma} has non-empty intersection with the triangle $\triangle(ADC)$, with the lines $\overline{AC}$ and $\overline{CD}$ excluded, leads to conditions on $\gamma$ and $q$. In the case of spherical symmetry, these are described by \cite[Eq.~(20),~(22),~p.~1909]{Cho2016}.\\
The requirement that \eqref{eq:linetildePtildeQGamma} has non-empty intersection with the triangle $\triangle(ABC)$, with the lines $\overline{AB}$ and $\overline{BC}$ excluded, leads to an additional condition on $\frac{1}{\tilde{q}}$ (cf. \cite[Eq.~(23),~p.~1909]{Cho2016}). In the following we adapt the notation from \cite{Cho2016}.\\
We decompose $P_N(V u) = P_N((P_{<N/8} V) u) + P_N((P_{\geq N/8} V) u)$ and for the first term we note that we can freely replace $u$ with $P_{N/8<\cdot<8N} u$ due to impossible frequency interactions and we find by making use of H\"older's inequality and Sobolev embedding on the sphere
\begin{equation*}
\begin{split}
\Vert P_N ((P_{<N/8} V)u) \Vert_{L_t^{\tilde{q}^\prime} \mathcal{L}_r^{\tilde{p}^\prime} L_\omega^2} &\leq C_{n} \Vert (P_{<N/8} V) (P_{N/8<\cdot<8N} u) \Vert_{L_t^{\tilde{q}^\prime} \mathcal{L}_r^{\tilde{p}^\prime} L_\omega^2} \\
&\leq C_{n} \Vert P_{<N/8} V \Vert_{L_t^r \mathcal{L}_r^w L_\omega^{\infty}} \Vert P_{N/8<\cdot<8N} u \Vert_{L_t^{q} \mathcal{L}_r^{p} L_\omega^2} \\
&\leq C_{n,p,w,\alpha} \Vert \Lambda_\omega^{\alpha} V \Vert_{L_t^r L_x^w} \sum_{M \sim N} \Vert P_M u \Vert_{L_t^{q} \mathcal{L}_r^{p} L_\omega^2},
\end{split}
\end{equation*}
whenever $\alpha > \frac{n-1}{w}$.\\
For the second term $(P_{\geq N/8} V) u$ we distinguish between three frequency regions:\\
In case of $N \ll 1$ we make use of Bernstein's inequality, which we state for convenience for spaces with mixed norms:
$$ \Vert P_N g \Vert_{\mathcal{L}_r^q L_\omega^2} \leq C_{n,p,q} N^{\frac{n}{p}-\frac{n}{q}} \Vert P_N g \Vert_{\mathcal{L}_r^p L_\omega^2}, $$
whenever $1 \leq p \leq q \leq \infty$. For the proof one can follow along the lines of the proof of the common variant, but making use of Lemma \ref{lem:YoungMixed} instead of usual Young's inequality. 
This gives for some small $\varepsilon^\prime >0$
\begin{equation*}
\begin{split}
\Vert P_N ((P_{\geq N/8} V) u) \Vert_{L_t^{\tilde{q}^\prime} \mathcal{L}_r^{\tilde{p}^\prime} L_\omega^2} &\leq C_{n,p,w,\varepsilon^\prime} N^\varepsilon \Vert (P_{\geq N/8} V) u \Vert_{L_t^{\tilde{q}^\prime} \mathcal{L}_r^{\tilde{p}^\prime - \varepsilon^\prime} L_\omega^2} \\
&\leq C_{n,p,w,\varepsilon^\prime} N^\varepsilon \Vert P_{ \geq N/8} V \Vert_{L_t^r \mathcal{L}_r^{w-\varepsilon^\prime} L_\omega^\infty} \Vert u \Vert_{L_t^{q} \mathcal{L}_r^{p} L_\omega^2} \\
 &\leq C_{n,p,w,\varepsilon^\prime,\alpha} N^\varepsilon \Vert \Lambda_\omega^\alpha V \Vert_{L_t^r L_x^{w-\varepsilon^\prime}} \Vert u \Vert_Y.
\end{split}
\end{equation*}
For $N \sim 1$ we make use of the crude estimate, which follows from taking out the operator norms of the frequency projectors:
\begin{equation*}
\begin{split}
\Vert P_N ((P_{\geq N/8} V) u) \Vert_{L_t^{\tilde{q}^\prime} \mathcal{L}_r^{\tilde{p}^\prime} L_\omega^2} 
&\leq C_{n,p,w,\alpha} \Vert V \Vert_{L_t^r \mathcal{L}_r^{w} L_\omega^\infty} \Vert u \Vert_{L_t^{q} \mathcal{L}_r^{p} L_\omega^2} \\
&\leq C_{n,p,w,\alpha} \Vert \Lambda_\omega^\alpha V \Vert_{L_t^r L_x^w} \Vert u \Vert_Y,
\end{split}
\end{equation*}
which is still acceptable because we only have to sum finitely many of these pieces.\\
For $N \gg 1$ we make use of the following Bernstein inequality:
$$ \Vert P_{\geq N} V \Vert_{\mathcal{L}_r^p L_\omega^2} \lesssim_{n,p,s} N^{-s} \Vert P_{\geq N} D^s V \Vert_{\mathcal{L}_r^p L_\omega^2}, $$
which holds, whenever $s \geq 0$ and $1 \leq p \leq \infty$. This gives
\begin{equation*}
\begin{split}
\Vert P_N ((P_{\geq N/8} V) u) \Vert_{L_t^{\tilde{q}^\prime} \mathcal{L}_r^{\tilde{p}^\prime} L_\omega^2} 
&\leq C_{n} \Vert P_{\geq N/8} V \Vert_{L_t^r \mathcal{L}_r^{w} L_\omega^\infty} \Vert u \Vert_{L_t^{q} \mathcal{L}_r^{p} L_\omega^2} \\
&\leq C_{n,p,w,\varepsilon,\alpha} N^{-\varepsilon} \Vert \Lambda_\omega^\alpha V \Vert_{L_t^r W_x^{\varepsilon,w}} \Vert u \Vert_{Y}.
\end{split}
\end{equation*}
We find the solution mapping to be contracting if $\Lambda_\omega^{\alpha} V \in L_t^r L_x^{w-\varepsilon} \cap L_t^r W_x^{\varepsilon,w}$ with $\varepsilon >0, \; \alpha > \frac{n-1}{w-\varepsilon}$.\\
Ad (ii): For the homogeneous part we find again from the energy estimate
\begin{equation*}
\Vert e^{itD^a} u_0 \Vert_{L_t^\infty \dot{B}^\gamma_{2,1}} = \left\Vert \sum_N N^\gamma \Vert P_N e^{it D^a} u_0 \Vert_{L_x^2} \right\Vert_{L_t^\infty}
\leq \sum_N N^\gamma \Vert P_N u_0 \Vert_{L_x^2} = \Vert u_0 \Vert_{\dot{B}^\gamma_{2,1}}.
\end{equation*}
For the inhomogeneous part we can also follow the strategy from \cite{Cho2016} using the same notation:
\begin{equation*}
\begin{split}
N^\gamma \Vert P_N \int_0^t e^{i(t-s)D^a} (Vu)(s) ds \Vert_{L_t^\infty L_x^2}
&\leq C_{n,\tilde{u},\tilde{v}} N^\gamma \Vert P_N (Vu) \Vert_{L_t^{\tilde{u}^\prime} \mathcal{L}_r^{\tilde{v}^\prime} L_\omega^2} \\
&\leq C_{n,\tilde{u},\tilde{v}} \Vert P_N (Vu) \Vert_{L_t^{\tilde{u}^\prime} \mathcal{L}_r^{b^\prime} L_\omega^2}.
\end{split}
\end{equation*}
Again we decompose $P_N(Vu) = P_N ((P_{<N/8} V) (P_{N/8<\cdot<8N} u)) + P_N ((P_{\geq N/8} V) u)$ and for the first term we find:
\begin{equation*}
\begin{split}
&\Vert P_N ((P_{<N/8} V) (P_{N/8<\cdot<8N} u)) \Vert_{L_t^{\tilde{u}^\prime} \mathcal{L}_r^{b^\prime} L_\omega^\infty} \\
&\leq C_{n,p,w,\alpha} \Vert \Lambda_\omega^\alpha V \Vert_{L_t^r L_x^w} \sum_{M \sim N} \Vert P_M u \Vert_{L_t^q \mathcal{L}_r^p L_\omega^2}, 
\end{split}
\end{equation*} 
where the second factor is controlled by $\Vert u \Vert_Y$ after summing over $N$.\\
The second term will be treated like in the first part of the proof:
For $N \ll 1$ we can employ a Bernstein inequality and find by the same means of the first part
$$ \Vert P_N((P_{\geq N/8} V)u) \Vert_{L_t^{\tilde{u}^\prime} \mathcal{L}_r^{b^\prime} L_\omega^2} \leq C_{n,p,w,\varepsilon^\prime,\alpha} N^\varepsilon \Vert \Lambda_\omega^{\alpha} V \Vert_{L_t^r L_x^{w-\varepsilon^\prime}} \Vert u \Vert_Y. $$
For $N \sim 1$ we make use of the rough estimate from the first part to find
$$\Vert P_N((P_{\geq N/8} V)u) \Vert_{L_t^{\tilde{u}^\prime} \mathcal{L}_r^{b^\prime} L_\omega^2} \leq C_{n,p,w,\alpha} \Vert \Lambda_\omega^\alpha V \Vert_{L_t^r L_x^w} \Vert u \Vert_Y $$
and for $N \gg 1$ we find 
$$\Vert P_N((P_{\geq N/8} V)u) \Vert_{L_t^{\tilde{u}^\prime} \mathcal{L}_r^{b^\prime} L_\omega^2} \leq
C_{n,p,w,\varepsilon,\alpha} N^{-\varepsilon} \Vert \Lambda_\omega^\alpha V \Vert_{L_t^r W_x^{\varepsilon,w}} \Vert u \Vert_Y,$$
which means that we need no additional requirements on $V$ to perform the iteration.
\end{proof}
\section*{Acknowledgements}
This article forms part of the author's master's thesis. The author would like to thank his thesis supervisor Sebastian Herr for numerous helpful comments related to this work. Financial support by the German Science Foundation DFG (IRTG 2235) is gratefully acknowledged.
\begin{bibdiv}
\begin{biblist}

\bib{Bergh1976}{book}{
      author={Bergh, J{\"o}ran},
      author={L{\"o}fstr{\"o}m, J{\"o}rgen},
       title={Interpolation spaces. {A}n introduction},
   publisher={Springer-Verlag, Berlin-New York},
        date={1976},
        note={Grundlehren der Mathematischen Wissenschaften, No. 223},
      review={\MR{0482275}},
}

\bib{BoucletMizutani2016}{article}{
      author={Bouclet, J.-M.},
      author={Mizutani, H.},
       title={Uniform resolvent and {S}trichartz estimates for
  {S}chr{\"o}dinger equations with critical singularities},
        date={2016-07},
     journal={ArXiv e-prints},
      eprint={1607.01187},
}

\bib{Cho2016}{article}{
      author={Cho, Chu-Hee},
      author={Koh, Youngwoo},
      author={Seo, Ihyeok},
       title={On inhomogeneous {S}trichartz estimates for fractional
  {S}chr\"odinger equations and their applications},
        date={2016},
        ISSN={1078-0947},
     journal={Discrete Contin. Dyn. Syst.},
      volume={36},
      number={4},
       pages={1905\ndash 1926},
         url={http://dx.doi.org/10.3934/dcds.2016.36.1905},
      review={\MR{3411547}},
}

\bib{Cho2013}{article}{
      author={Cho, Yonggeun},
      author={Lee, Sanghyuk},
       title={Strichartz estimates in spherical coordinates},
        date={2013},
        ISSN={0022-2518},
     journal={Indiana Univ. Math. J.},
      volume={62},
      number={3},
       pages={991\ndash 1020},
         url={http://dx.doi.org/10.1512/iumj.2013.62.4970},
      review={\MR{3164853}},
}

\bib{ChoOzawaXia2011}{article}{
      author={Cho, Yonggeun},
      author={Ozawa, Tohru},
      author={Xia, Suxia},
       title={Remarks on some dispersive estimates},
        date={2011},
        ISSN={1534-0392},
     journal={Commun. Pure Appl. Anal.},
      volume={10},
      number={4},
       pages={1121\ndash 1128},
         url={http://dx.doi.org/10.3934/cpaa.2011.10.1121},
      review={\MR{2787438}},
}

\bib{Christ2000}{article}{
      author={Christ, Michael},
      author={Kiselev, Alexander},
       title={Maximal functions associated to filtrations},
        date={2001},
        ISSN={0022-1236},
     journal={J. Funct. Anal.},
      volume={179},
      number={2},
       pages={409\ndash 425},
         url={http://dx.doi.org/10.1006/jfan.2000.3687},
      review={\MR{1809116}},
}

\bib{Foschi2005}{article}{
      author={Foschi, Damiano},
       title={Inhomogeneous {S}trichartz estimates},
        date={2005},
        ISSN={0219-8916},
     journal={J. Hyperbolic Differ. Equ.},
      volume={2},
      number={1},
       pages={1\ndash 24},
         url={http://dx.doi.org/10.1142/S0219891605000361},
      review={\MR{2134950}},
}

\bib{Grafakos2008}{book}{
      author={Grafakos, Loukas},
       title={Classical {F}ourier analysis},
     edition={Second},
      series={Graduate Texts in Mathematics},
   publisher={Springer, New York},
        date={2008},
      volume={249},
        ISBN={978-0-387-09431-1},
      review={\MR{2445437}},
}

\bib{Grafakos2009}{book}{
      author={Grafakos, Loukas},
       title={Modern {F}ourier analysis},
     edition={Second},
      series={Graduate Texts in Mathematics},
   publisher={Springer, New York},
        date={2009},
      volume={250},
        ISBN={978-0-387-09433-5},
         url={http://dx.doi.org/10.1007/978-0-387-09434-2},
      review={\MR{2463316}},
}

\bib{Guo2014}{article}{
      author={Guo, Zihua},
       title={Sharp spherically averaged {S}tichartz estimates for the
  {S}chr\"odinger equation},
        date={2016},
        ISSN={0951-7715},
     journal={Nonlinearity},
      volume={29},
      number={5},
       pages={1668\ndash 1686},
         url={http://dx.doi.org/10.1088/0951-7715/29/5/1668},
      review={\MR{3503075}},
}

\bib{GuoLee2014}{article}{
      author={Guo, Zihua},
      author={Lee, Sanghyuk},
      author={Nakanishi, Kenji},
      author={Wang, Chengbo},
       title={Generalized {S}trichartz estimates and scattering for 3{D}
  {Z}akharov system},
        date={2014},
        ISSN={0010-3616},
     journal={Comm. Math. Phys.},
      volume={331},
      number={1},
       pages={239\ndash 259},
         url={http://dx.doi.org/10.1007/s00220-014-2006-0},
      review={\MR{3232001}},
}

\bib{Guo2008}{article}{
      author={Guo, Zihua},
      author={Peng, Lizhong},
      author={Wang, Baoxiang},
       title={Decay estimates for a class of wave equations},
        date={2008},
        ISSN={0022-1236},
     journal={J. Funct. Anal.},
      volume={254},
      number={6},
       pages={1642\ndash 1660},
         url={http://dx.doi.org/10.1016/j.jfa.2007.12.010},
      review={\MR{2396016}},
}

\bib{Jiang2012}{article}{
      author={Jiang, Jin-Cheng},
      author={Wang, Chengbo},
      author={Yu, Xin},
       title={Generalized and weighted {S}trichartz estimates},
        date={2012},
        ISSN={1534-0392},
     journal={Commun. Pure Appl. Anal.},
      volume={11},
      number={5},
       pages={1723\ndash 1752},
         url={http://dx.doi.org/10.3934/cpaa.2012.11.1723},
      review={\MR{2911108}},
}

\bib{TaoKeel1998}{article}{
      author={Keel, Markus},
      author={Tao, Terence},
       title={Endpoint {S}trichartz estimates},
        date={1998},
        ISSN={0002-9327},
     journal={Amer. J. Math.},
      volume={120},
      number={5},
       pages={955\ndash 980},
  url={http://muse.jhu.edu/journals/american_journal_of_mathematics/v120/120.5keel.pdf},
      review={\MR{1646048}},
}

\bib{Mizutani2016}{article}{
      author={Mizutani, H.},
       title={Remarks on endpoint {S}trichartz estimates for {S}chr{\"o}dinger
  equations with the critical inverse-square potential},
        date={2016-07},
     journal={ArXiv e-prints},
      eprint={1607.02848},
}

\bib{Ovcharov2012}{article}{
      author={Ovcharov, Evgeni~Y.},
       title={Radial {S}trichartz estimates with application to the 2-{D}
  {D}irac-{K}lein-{G}ordon system},
        date={2012},
        ISSN={0360-5302},
     journal={Comm. Partial Differential Equations},
      volume={37},
      number={10},
       pages={1754\ndash 1788},
         url={http://dx.doi.org/10.1080/03605302.2011.632047},
      review={\MR{2971205}},
}

\bib{RodnianskiSchlag2004}{article}{
      author={Rodnianski, Igor},
      author={Schlag, Wilhelm},
       title={Time decay for solutions of {S}chr\"odinger equations with rough
  and time-dependent potenetials},
        date={2004},
        ISSN={0020-9910},
     journal={Invent. Math.},
      volume={155},
      number={3},
       pages={451\ndash 513},
         url={http://dx.doi.org/10.1007/s00222-003-0325-4},
      review={\MR{2038194}},
}

\bib{Sogge1993}{book}{
      author={Sogge, Christopher~D.},
       title={Fourier integrals in classical analysis},
      series={Cambridge Tracts in Mathematics},
   publisher={Cambridge University Press, Cambridge},
        date={1993},
      volume={105},
        ISBN={0-521-43464-5},
         url={http://dx.doi.org/10.1017/CBO9780511530029},
      review={\MR{1205579}},
}

\bib{Sterbenz2005}{article}{
      author={Sterbenz, Jacob},
       title={Angular regularity and {S}trichartz estimates for the wave
  equation},
        date={2005},
        ISSN={1073-7928},
     journal={Int. Math. Res. Not.},
      number={4},
       pages={187\ndash 231},
         url={http://dx.doi.org/10.1155/IMRN.2005.187},
        note={With an appendix by Igor Rodnianski},
      review={\MR{2128434}},
}

\bib{Tao2000}{article}{
      author={Tao, Terence},
       title={Spherically averaged endpoint {S}trichartz estimates for the
  two-dimensional {S}chr\"odinger equation},
        date={2000},
        ISSN={0360-5302},
     journal={Comm. Partial Differential Equations},
      volume={25},
      number={7-8},
       pages={1471\ndash 1485},
         url={http://dx.doi.org/10.1080/03605300008821556},
      review={\MR{1765155}},
}

\end{biblist}
\end{bibdiv}

\end{document}